\documentclass[twocolumn]{autart}

\usepackage[T1]{fontenc}
\usepackage{amsmath}
\usepackage{amsfonts}
\usepackage{amssymb}
\usepackage{lmodern}
\usepackage{booktabs}
\usepackage{systeme}
\usepackage{graphicx}

\newtheorem{theorem}{Theorem}
\newtheorem{proposition}[theorem]{Proposition}

\newtheorem{lemma}[theorem]{Lemma}

\makeatletter

\def\@begintheorem#1#2{%
  \par
  \addvspace{6pt}%
  \noindent
  \textbf{#1~#2.}\quad
  \itshape
}

\def\@opargbegintheorem#1#2#3{%
  \par
  \addvspace{6pt}%
  \noindent
  \textbf{#1~#2 (#3).}\quad
  \itshape
}

\def\@endtheorem{%
  \par
  \addvspace{6pt}%
}

\makeatother

\newenvironment{proof}[1][Proof]{%
  \par\noindent\textit{#1.}\quad
}{%
  \hfill$\square$\par
}

\begin{document}

\begin{frontmatter}

\title{Predictor-Based Stabilization of a Delayed Hetero-Dimensional Shoreline--Bathymetry System}

\thanks[footnoteinfo]{
Corresponding author A. Ciss\'e.}

\author[Paestum]{Amadou Ciss\'e}\ead{amadou.cisse@univ-lorraine.fr},    
\author[Paestum,Rome]{Mohamed Boutayeb}\ead{mohamed.boutayeb@univ-lorraine.fr},               

\address[Paestum]{CRAN-UMR-CNRS-7039 University of Lorraine, F-57000, France}  
\address[Rome]{TicLab Universit\'e Internationale de Rabat--Maroc}             

\begin{keyword}
Hetero-dimensional PDEs; predictor feedback; input-delay compensation; spectral stabilization; coupled parabolic systems.
\end{keyword}                            

\begin{abstract}
Exponential stabilization is investigated for a hetero-dimensional parabolic system in which a one-dimensional shoreline subsystem, subject to a constant input delay, is bidirectionally coupled with unactuated two-dimensional bathymetric dynamics.
A spectral decomposition separates the finite-dimensional critical shoreline modes from the exponentially stable residual dynamics.
State feedback is designed on the critical subspace and lifted to the full shoreline space. An auxiliary transport equation represents the delay, while a predictor removes the explicit input delay from the nominal predictor dynamics. Exponential stability of the full coupled system is established by combining shoreline stabilization and bathymetric coercivity using weighted input--output estimates and a small-gain condition on the hetero-dimensional interaction.
Numerical simulations show that uncompensated delayed feedback can strongly amplify both shoreline and bathymetric responses, whereas predictor compensation restores decay of the critical shoreline dynamics and prevents delay-induced amplification from propagating to the unactuated bathymetric subsystem.
\end{abstract}

\end{frontmatter}

\section{Introduction}
\label{sec:introduction}

Coupled reaction--advection--diffusion systems describe interacting processes involving transport and diffusion across distinct spatial domains. Coastal morphodynamics provides a representative setting, where shoreline migration and bathymetric evolution result from interacting hydrodynamic and sediment-transport mechanisms \cite{Ribas2015,Geleynse2015,kim2015h2d}.
Shoreline evolution is represented along a one-dimensional coastal coordinate, whereas bathymetric variations develop over a two-dimensional nearshore domain. PDE systems coupling states defined on domains of different spatial dimensions have been investigated in mixed-dimensional and bulk--surface settings \cite{Kuchta2021Mixed,Elliott2017BulkSurface}.
The bidirectionally coupled shoreline--bathymetry configuration considered here belongs to this broader class of hetero-dimensional interconnections, but differs from standard vector-valued PDE systems posed on a common spatial domain.

Parabolic PDE stabilization has been extensively investigated using boundary or distributed actuation, including backstepping, output-feedback, optimal-control, and adaptive designs \cite{smyshlyaev2005backstepping,vazquez2016boundary,zekraoui2025output,orlov2017output,glowinski2022bilinear,bolognani2008adaptive}.
Partial actuation introduces an additional difficulty, as the unactuated dynamics must be stabilized indirectly through the coupling. Exponential stabilization results have been obtained for coupled reaction--diffusion systems under partial actuation \cite{kitsos2023stabilization}.
These results concern coupled states evolving on a common spatial domain and do not directly address the hetero-dimensional shoreline--bathymetry coupling considered here.

Input delays constitute a further difficulty in distributed-parameter control, as they may arise from actuator dynamics, transport mechanisms, or communication constraints and can substantially degrade closed-loop stability. Predictor-based compensation represents the input delay as an augmented distributed state, commonly through an auxiliary transport equation, and constructs a predictor state for the nominal delay-free feedback design.
Such techniques have been developed for parabolic PDEs and extended to reaction--advection--diffusion systems with distributed actuation \cite{krstic2009compensating,qi2019stabilization,wang2021delay}.
Related stability and robustness results for delayed distributed and infinite-dimensional systems can be found in \cite{fridman2014introduction,kang2025delay}.
These predictor-based developments, however, do not directly account for the hetero-dimensional and underactuated coupling considered here.

The problem combines hetero-dimensional coupling, partial actuation, unstable shoreline modes, and a constant input delay. Control is restricted to the one-dimensional shoreline, while the two dimensional bathymetry is influenced indirectly through the bidirectional coupling. Stabilization must therefore compensate for the delayed shoreline actuation and account for the interaction between the actuated and unactuated dynamics. The coupling operators connect states defined on domains of different spatial dimensions, and their contributions to the energy balance cannot be assumed to cancel or provide dissipation. Existing mixed-dimensional PDE studies and predictor-based stabilization results do not directly address this combination.

The main contributions to the stabilization of the delayed hetero-dimensional shoreline--bathymetry system are:
\begin{itemize}
    \item \textbf{Hetero-dimensional morphodynamic model.}
    A coupled parabolic model is formulated to describe the bidirectional interaction between a one-dimensional shoreline and a two-dimensional bathymetric subsystem. Distributed actuation is restricted to the shoreline, while the bathymetry remains unactuated.

    \item \textbf{Spectral feedback design.}
    A spectral decomposition separates the finite-dimensional critical shoreline modes from the exponentially stable residual dynamics. State feedback is designed on the critical subspace and lifted to the full shoreline space.

    \item \textbf{Predictor-based delay compensation.}
    An auxiliary transport equation represents the constant input delay. A predictor is then constructed to remove the explicit delay from the nominal predictor dynamics.

    \item \textbf{Exponential stability of the full coupled system.}
    Exponential stability of the delayed closed-loop system, including the unactuated bathymetric subsystem, is established under a small-gain condition. The analysis combines predictor-based shoreline stabilization, bathymetric coercivity, and weighted input--output estimates for the hetero-dimensional interaction.

    \item \textbf{Numerical assessment.}
    Numerical simulations show that uncompensated delayed feedback can amplify both shoreline and bathymetric responses, whereas predictor compensation restores decay of the critical shoreline dynamics and prevents the resulting delay-induced amplification from propagating to the unactuated bathymetric subsystem.
\end{itemize}

The remainder of the paper is organized as follows.
Section~\ref{sec:model} introduces the coupled model.
The predictor-based feedback design and stability analysis are developed in Section~\ref{sec:control}, followed by the numerical study in Section~\ref{sec:numerics}.
The final section concludes the paper.

\section{Mathematical Model and Structural Properties}
\label{sec:model}

This section introduces the delayed hetero-dimensional shoreline--bathymetry model and identifies the coupling properties required for the subsequent stabilization analysis.

\subsection{Mathematical Model}
\label{subsec:mathematical-model}

Let
\[
I:=(0,\ell),
\qquad
\Omega:=I\times(0,d),
\qquad
\ell,d>0,
\]
and consider the hetero-dimensional system with constant input delay \(D>0\):
\begin{align}
\partial_t u(x,t)
&=
\epsilon_1 \partial_{xx}u(x,t)
+\alpha_1(x,t)u(x,t)\nonumber\\
&\quad
-\int_0^d
\lambda_1(x,y,t)\partial_xv(x,y,t)\,dy\nonumber\\
&\quad
+g(x)U(x,t-D),
\label{eq:u_pde}
\\
\partial_t v(x,y,t)
&=
\epsilon_2\partial_{xx}v(x,y,t)
+\bar\epsilon_2\partial_{yy}v(x,y,t)\nonumber\\
&\quad
+\alpha_2(x,y,t)v(x,y,t)
-\lambda_2(x,t)\partial_xu(x,t),
\label{eq:v_pde}
\end{align}
for \(x\in I\) in \eqref{eq:u_pde},
\((x,y)\in\Omega\) in \eqref{eq:v_pde}, and \(t>0\).

The diffusion coefficients satisfy
\[
\epsilon_1,\epsilon_2,\bar\epsilon_2>0,
\]
while
\[
\alpha_1\in L^\infty(I\times\mathbb R_+),
\qquad
\alpha_2\in L^\infty(\Omega\times\mathbb R_+),
\]
and
\[
\lambda_1\in L^\infty(\Omega\times\mathbb R_+),
\qquad
\lambda_2\in L^\infty(I\times\mathbb R_+)
\]
are real-valued. No sign condition is imposed on the coupling coefficients.

Set
\[
X:=L^2(I).
\]
The input \(U(\cdot,t)\in X\) acts on the shoreline subsystem through the spatial profile
\[
g\in L^\infty(I),
\qquad
\bigl|\{x\in I:\ g(x)\neq0\}\bigr|>0,
\]
and is subject to the constant input delay \(D\).

From a coastal-engineering perspective, the control term may be interpreted as an idealized sediment-management action, such as controlled beach nourishment \cite{WorkDean1995}.
The profile \(g\) describes the spatial distribution of the actuation, while \(U\) specifies the spatially distributed command.
The delay \(D\) represents an effective lag between command generation and the resulting morphodynamic action.
This interpretation is restricted to the control-oriented model: actuator saturation, unilateral sediment-placement constraints, and detailed sediment-transfer dynamics are not included.

The boundary conditions are
\begin{equation}
u(0,t)=u(\ell,t)=0,
\qquad t>0,
\label{eq:bc_u}
\end{equation}
and
\begin{equation}
\begin{cases}
\partial_xv(0,y,t)=\partial_xv(\ell,y,t)=0,
& (y,t)\in(0,d)\times\mathbb R_+,\\[1mm]
\partial_yv(x,0,t)=\partial_yv(x,d,t)=0,
& (x,t)\in I\times\mathbb R_+.
\end{cases}
\label{eq:bc_v}
\end{equation}
The shoreline Dirichlet conditions in \eqref{eq:bc_u} represent a coastal segment whose end points are pinned to the reference shoreline configuration, so that the modeled perturbation vanishes at the lateral boundaries.
The homogeneous Neumann conditions in \eqref{eq:bc_v} impose a zero normal bathymetric gradient at the boundary of the nearshore domain and, consequently, zero normal diffusive bathymetric flux through these artificial boundaries. These conditions prescribe the boundary behavior of the modeled perturbations and provide the boundary setting for the associated diffusion operators.

The initial data and input history are prescribed by
\begin{equation}
u(x,0)=u_0(x),
\qquad
v(x,y,0)=v_0(x,y),
\label{eq:init}
\end{equation}
and
\begin{equation}
U(\cdot,\tau)=U_0(\cdot,\tau)
\quad\text{in }X,
\quad\text{for a.e. }\tau\in(-D,0).
\label{eq:input_history}
\end{equation}

The state and energy spaces are
\[
\mathcal H:=L^2(I)\times L^2(\Omega),
\qquad
\mathcal V:=H_0^1(I)\times H^1(\Omega),
\]
equipped with
\[
\|(u,v)\|_{\mathcal H}^2
=
\|u\|_{L^2(I)}^2+\|v\|_{L^2(\Omega)}^2,
\]
and
\[
\|(u,v)\|_{\mathcal V}^2
=
\|u\|_{H_0^1(I)}^2+\|v\|_{H^1(\Omega)}^2.
\]
For
\[
(u_0,v_0)\in\mathcal H,
\qquad
U_0\in L^2(-D,0;X),
\]
the variational setting is based on the Gelfand triple
\[
\mathcal V\hookrightarrow\mathcal H\hookrightarrow\mathcal V'.
\]

\subsection{Structural Properties}
\label{subsec:structural-properties}

The shoreline--bathymetry interaction is represented by
\[
\begin{aligned}
C_{12}(t)&:H^1(\Omega)\to L^2(I),\\
(C_{12}(t)v)(x)
&=
\int_0^d
\lambda_1(x,y,t)\partial_xv(x,y)\,dy,
\end{aligned}
\]
and
\[
\begin{aligned}
C_{21}(t)&:H_0^1(I)\to L^2(\Omega),\\
(C_{21}(t)u)(x,y)
&=
\lambda_2(x,t)\partial_xu(x).
\end{aligned}
\]
Both coupling operators are well defined on the corresponding components of \(\mathcal V\) and satisfy bounds uniform in time, as stated next.

\begin{lemma}[Boundedness of the coupling operators]
\label{lem:bounded_coupling}
For almost every \(t\ge0\),
\[
\|C_{12}(t)v\|_{L^2(I)}
\le
M_{12}\|v\|_{H^1(\Omega)},
\]
and
\[
\|C_{21}(t)u\|_{L^2(\Omega)}
\le
M_{21}\|u\|_{H_0^1(I)},
\]
where
\[
M_{12}
=
\sqrt d\,
\|\lambda_1\|_{L^\infty(\Omega\times\mathbb R_+)},
\qquad
M_{21}
=
\sqrt d\,
\|\lambda_2\|_{L^\infty(I\times\mathbb R_+)}.
\]
\end{lemma}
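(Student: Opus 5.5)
The plan is to prove both estimates pointwise in \(x\) with the Cauchy--Schwarz inequality in the transverse variable \(y\), and then integrate over \(I\). Fix \(t\ge0\) outside a null set, chosen so that \(\lambda_1(\cdot,\cdot,t)\) and \(\lambda_2(\cdot,t)\) are measurable and bounded by their global \(L^\infty\) norms. Such a null set exists because, by Fubini, a function in \(L^\infty(\Omega\times\mathbb R_+)\) has almost every time slice in \(L^\infty(\Omega)\) with norm at most the global one. This is the only point requiring care, and it is why the statement holds only for almost every \(t\).

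\textbf{Bound on \(C_{12}(t)\).} Let \(v\in H^1(\Omega)\). Then \(\partial_xv\in L^2(\Omega)\), so by Fubini \(\partial_xv(x,\cdot)\in L^2(0,d)\) for a.e. \(x\in I\). For such \(x\), Cauchy--Schwarz on \((0,d)\) gives
\[
\Bigl|\int_0^d\lambda_1(x,y,t)\partial_xv(x,y)\,dy\Bigr|^2
\le \int_0^d|\lambda_1(x,y,t)|^2\,dy\int_0^d|\partial_xv(x,y)|^2\,dy
\le d\,\|\lambda_1\|_{L^\infty(\Omega\times\mathbb R_+)}^2\int_0^d|\partial_xv(x,y)|^2\,dy .
\]
Integrating over \(x\in I\) and applying Tonelli yields
\[
\|C_{12}(t)v\|_{L^2(I)}^2\le M_{12}^2\|\partial_xv\|_{L^2(\Omega)}^2\le M_{12}^2\|v\|_{H^1(\Omega)}^2 .
\]
Measurability of \(x\mapsto (C_{12}(t)v)(x)\) also follows from Fubini.

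\textbf{Bound on \(C_{21}(t)\).} Let \(u\in H_0^1(I)\). The function \((x,y)\mapsto\lambda_2(x,t)\partial_xu(x)\) does not depend on \(y\), so integrating in \(y\) simply produces a factor \(d\):
\[
\|C_{21}(t)u\|_{L^2(\Omega)}^2=\int_I\int_0^d|\lambda_2(x,t)|^2|\partial_xu(x)|^2\,dy\,dx
\le d\,\|\lambda_2\|_{L^\infty(I\times\mathbb R_+)}^2\|\partial_xu\|_{L^2(I)}^2 .
\]
We then use \(\|\partial_xu\|_{L^2(I)}\le\|u\|_{H_0^1(I)}\). This holds whether the \(H_0^1\) norm is taken as the full \(H^1\) norm or as the gradient seminorm, which is equivalent to it by Poincaré. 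Taking square roots gives the constant \(M_{21}\).

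Both estimates are therefore routine. The only step needing care is the a.e.-in-time slicing of the \(L^\infty\) coefficients described at the start, and the constants \(\sqrt d\,\|\lambda_i\|_\infty\) come out exactly as stated.
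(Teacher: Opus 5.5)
Your proof is correct and follows the paper's argument: Cauchy--Schwarz in \(y\) followed by integration over \(I\) for \(C_{12}(t)\), and the direct computation yielding the factor \(d\) for \(C_{21}(t)\). Your remarks on a.e.\ time slicing of the \(L^\infty\) coefficients and on measurability via Fubini add care that the paper leaves implicit, but the route is the same.
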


\begin{proof}
Let \(v\in H^1(\Omega)\). By the Cauchy--Schwarz inequality in the \(y\)-variable,
\[
\begin{aligned}
|(C_{12}(t)v)(x)|^2
&\le
\left(
\int_0^d|\lambda_1(x,y,t)|^2\,dy
\right)\\
&\quad\times
\left(
\int_0^d|\partial_xv(x,y)|^2\,dy
\right)\\
&\le
d\|\lambda_1\|_{L^\infty}^2
\int_0^d|\partial_xv(x,y)|^2\,dy.
\end{aligned}
\]
Integration over \(I\) gives
\[
\begin{aligned}
\|C_{12}(t)v\|_{L^2(I)}^2
&\le
d\|\lambda_1\|_{L^\infty}^2
\|\partial_xv\|_{L^2(\Omega)}^2\\
&\le
M_{12}^2\|v\|_{H^1(\Omega)}^2.
\end{aligned}
\]

Similarly, for \(u\in H_0^1(I)\),
\[
\begin{aligned}
\|C_{21}(t)u\|_{L^2(\Omega)}^2
&=
\int_0^\ell\int_0^d
|\lambda_2(x,t)|^2
|\partial_xu(x)|^2\,dy\,dx\\
&\le
d\|\lambda_2\|_{L^\infty}^2
\|\partial_xu\|_{L^2(I)}^2\\
&\le
M_{21}^2\|u\|_{H_0^1(I)}^2.
\end{aligned}
\]
Taking square roots completes the proof.
\end{proof}

It follows from Lemma~\ref{lem:bounded_coupling} that
\[
\mathcal C(t)(u,v)
=
\bigl(-C_{12}(t)v,-C_{21}(t)u\bigr)
\]
defines a uniformly bounded operator from \(\mathcal V\) into \(\mathcal H\) for almost every \(t\geq0\).
This boundedness property provides no information on the sign of the associated energy contribution. In particular, it implies neither dissipativity nor energy cancellation, as established by the following proposition.

\begin{proposition}[Sign-indefinite hetero-dimensional interaction]
\label{prop:sign-indefinite}
Under the standing assumptions, the hetero-dimensional coupling need not be sign-definite. More precisely, there exists an admissible choice of coupling coefficients for which
\[
\left\langle
\mathcal C(t)(u,v),(u,v)
\right\rangle_{\mathcal H}
\]
takes both positive and negative values as \((u,v)\) varies in \(\mathcal V\). Consequently, neither dissipativity nor cancellation of the coupling contribution in the energy identity can be inferred from the standing assumptions alone.
\end{proposition}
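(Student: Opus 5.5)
We prove the proposition by exhibiting one explicit admissible choice of coefficients together with two explicit test pairs. We compute the bilinear form directly from the definitions of \(C_{12}(t)\) and \(C_{21}(t)\):
\[
\left\langle \mathcal C(t)(u,v),(u,v)\right\rangle_{\mathcal H}
=
-\int_0^\ell\!\!\int_0^d \lambda_1\,\partial_x v\,u\,dy\,dx
-\int_0^\ell\!\!\int_0^d \lambda_2\,\partial_x u\,v\,dy\,dx .
\]
We take constant coefficients \(\lambda_1\equiv\lambda_2\equiv 1\), which belong to the required \(L^\infty\) classes. For a bathymetric state independent of \(y\), \(v(x,y)=w(x)\) with \(w\in H^1(I)\), the form becomes \(-d\int_0^\ell (w'u+u'w)\,dx=-d\,[uw]_0^\ell=0\), because \(u\in H_0^1(I)\). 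This choice therefore does not work, since the form is a pure boundary term that vanishes. The first step is to break this antisymmetric structure.

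The simplest fix is to take \(\lambda_2\equiv 0\) and \(\lambda_1\equiv 1\). Both are admissible, since no sign or nondegeneracy condition is imposed. The form then reduces to \(Q(u,v)=-d\int_0^\ell w'(x)u(x)\,dx\) for \(y\)-independent \(v=w\). Next we pick a fixed nonzero \(u\in H_0^1(I)\), for instance \(u(x)=\sin(\pi x/\ell)\), and set \(w=\pm u\). With \(w=u\) we get \(-d\int u'u=0\), so this is not enough. Instead we choose \(w(x)=\pm\cos(\pi x/\ell)\in H^1(I)\). Then \(w'=\mp(\pi/\ell)\sin(\pi x/\ell)\), and
\[
Q=\pm d\,\frac{\pi}{\ell}\int_0^\ell \sin^2(\pi x/\ell)\,dx=\pm\frac{\pi d}{2}.
\]
The two signs give a strictly positive value and a strictly negative value. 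Both test pairs lie in \(\mathcal V\), because \(\sin(\pi\cdot/\ell)\in H_0^1(I)\) and the \(y\)-constant extension of \(\cos(\pi\cdot/\ell)\) lies in \(H^1(\Omega)\).

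If one prefers both coupling coefficients to be nonzero, the same argument works with \(\lambda_1\equiv1\) and \(\lambda_2\equiv\kappa\) for a constant \(\kappa\neq1\). Integration by parts gives \(Q=-d(1-\kappa)\int_0^\ell w'u\,dx\), and the sign again flips with \(w\mapsto -w\). Alternatively, the form is always odd under \((u,v)\mapsto(u,-v)\): it is linear in \(v\) for fixed \(u\), so \(Q(u,-v)=-Q(u,v)\). Hence it suffices to find a single pair with \(Q\neq0\).

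The final step draws the consequence. Since \(Q\) takes both signs on \(\mathcal V\), no inequality \(Q\le0\) (dissipativity) and no identity \(Q=0\) (cancellation) can hold under the standing assumptions. The main obstacle is the one noted at the start: avoiding choices that make the form collapse to a vanishing boundary term. This is handled by the observation above that the form is odd in \(v\) and by exhibiting one pair with \(Q(u,v)\neq0\).
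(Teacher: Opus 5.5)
Your proof is correct and follows essentially the same route as the paper: switch off one coupling coefficient, take a $y$-independent bathymetric state, and obtain both signs from $v\mapsto -v$. Your oddness remark is exactly the mechanism behind the paper's $v_\pm$, and the difference is only cosmetic. You keep $\lambda_1\equiv1$, $\lambda_2\equiv0$ with $u=\sin(\pi x/\ell)$ and $w=\pm\cos(\pi x/\ell)$, whereas the paper keeps $\lambda_1\equiv0$, $\lambda_2\equiv1$ with $v_\pm=\pm u_x$, which gives the value $\mp d\|u_x\|_{L^2(I)}^2$ directly for any nonzero $u$.
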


\begin{proof}
For \((u,v)\in\mathcal V\),
\[
\left\langle
\mathcal C(t)(u,v),(u,v)
\right\rangle_{\mathcal H}
=
-\left\langle C_{12}(t)v,u\right\rangle_{L^2(I)}
-\left\langle C_{21}(t)u,v\right\rangle_{L^2(\Omega)}.
\]
To exhibit the sign-indefinite character, consider the admissible coefficients
\[
\lambda_1\equiv0,
\qquad
\lambda_2\equiv1,
\]
and choose any nonzero \(u\in C_c^\infty(I)\).

Define the \(y\)-independent functions
\[
v_\pm(x,y)=\pm u_x(x).
\]
Since \(u_x\in C_c^\infty(I)\), one has \(v_\pm\in H^1(\Omega)\). Moreover,
\[
C_{12}(t)v_\pm=0,
\qquad
C_{21}(t)u=u_x.
\]
Hence
\[
\begin{aligned}
\left\langle
\mathcal C(t)(u,v_\pm),(u,v_\pm)
\right\rangle_{\mathcal H}
&=
-\int_0^\ell\int_0^d
u_x(x)\,v_\pm(x,y)\,dy\,dx\\
&=
\mp d\,\|u_x\|_{L^2(I)}^2.
\end{aligned}
\]
Since \(u\neq0\), these values are nonzero and have opposite signs. Thus, for the same admissible coefficient configuration, the coupling contribution takes both signs as the state varies.
\end{proof}

The control objective is therefore to compensate the delayed shoreline actuation and establish exponential stability of the full coupled system without direct actuation of the bathymetric component, despite the sign-indefinite hetero-dimensional interaction. The predictor-based stabilization design is developed in the next section.

\section{Predictor-Based Spectral Stabilization}
\label{sec:control}

This section develops the stabilization design for the delayed coupled system. The construction combines spectral feedback on the finite-dimensional critical shoreline subspace with predictor compensation of the actuator delay, followed by a stability analysis of the full hetero-dimensional dynamics.

\subsection{Predictor-Based Spectral Feedback Design}
\label{subsec:feedback-design}

Since only the shoreline subsystem is directly actuated, the feedback is constructed from a nominal shoreline operator and then coupled with a predictor representation of the delayed input. To this end, let
\[
\bar{\alpha}_1\in L^\infty(I;\mathbb R)
\]
be a prescribed time-independent reference coefficient used for the spectral feedback design. It may be chosen as a representative value of the time-dependent coefficient \(\alpha_1(\cdot,t)\); no particular averaging rule is required by the analysis. The difference \(\alpha_1(\cdot,t)-\bar{\alpha}_1\) is retained explicitly in the closed-loop analysis.

Define the nominal shoreline operator
\[
A_n\phi
=
\epsilon_1\phi_{xx}
+
\bar{\alpha}_1(x)\phi,
\qquad
D(A_n)=H^2(I)\cap H_0^1(I),
\]
on \(X=L^2(I)\), together with the bounded actuation operator
\[
(B\psi)(x)=g(x)\psi(x),
\qquad
B\in\mathcal L(X).
\]
The shoreline equation can then be written as
\begin{equation}
\dot u(t)
=
A_nu(t)+BU(t-D)+\mathfrak d(t),
\label{eq:shoreline-residual-form}
\end{equation}
where
\[
\mathfrak d(t)
=
\bigl(\alpha_1(\cdot,t)-\bar{\alpha}_1\bigr)u(t)
-
C_{12}(t)v(t).
\]
Thus, \(A_n\) serves as the nominal generator underlying the spectral decomposition, feedback design, and predictor construction, while the time dependence of \(\alpha_1\) and the shoreline--bathymetry coupling remain present in the full dynamics through \(\mathfrak d\).

Write
\[
A_n=A_D+M_{\bar\alpha_1},
\qquad
\begin{cases}
A_D\phi=\epsilon_1\phi_{xx},\\
D(A_D)=H^2(I)\cap H_0^1(I),
\end{cases}
\]
where \(M_{\bar\alpha_1}\) denotes multiplication by \(\bar\alpha_1\).
The Dirichlet diffusion operator \(A_D\) is self-adjoint on \(L^2(I)\). Since \(\bar\alpha_1\) is real-valued and bounded, \(M_{\bar\alpha_1}\) is bounded and self-adjoint.
Consequently, \(A_n\), with \(D(A_n)=D(A_D)\), is self-adjoint.

Moreover, standard bounded-perturbation results for analytic semigroups \cite{Pazy1983} imply that \(A_n\) generates an analytic semigroup on \(X\).
Finally, the compact embedding
\[
D(A_n)=H^2(I)\cap H_0^1(I)
\hookrightarrow L^2(I)
\]
implies that \(A_n\) has compact resolvent.

Let \(\rho>0\) be fixed, and let \(P_{\rm cr}\) denote the orthogonal spectral projection of \(A_n\) associated with the eigenvalues contained in \((-\rho,\infty)\). Set
\[
P_{\rm st}=I-P_{\rm cr},
\qquad
X_{\rm cr}=P_{\rm cr}X,
\qquad
X_{\rm st}=P_{\rm st}X,
\]
so that
\[
X=X_{\rm cr}\oplus X_{\rm st}.
\]
Hereafter, the subscripts ``cr'' and ``st'' refer to the critical and stable residual components, respectively.
Since \(A_n\) is self-adjoint, bounded from above, and has compact resolvent, its spectrum consists of real isolated eigenvalues of finite multiplicity that can accumulate only at \(-\infty\). Hence \(X_{\rm cr}\) is finite-dimensional.
Moreover, with
\[
A_{\rm st}:=A_n|_{X_{\rm st}},
\]
one has
\[
\sigma(A_{\rm st})\subset(-\infty,-\rho],
\]
and therefore
\begin{equation}
\|e^{A_{\rm st}t}\|_{\mathcal L(X_{\rm st})}
\le e^{-\rho t},
\qquad t\ge0.
\label{eq:stable-residual-decay}
\end{equation}
Finally, define
\[
A_{\rm cr}:=A_n|_{X_{\rm cr}},
\qquad
B_{\rm cr}:=P_{\rm cr}B\in\mathcal L(X,X_{\rm cr}).
\]

The nominal feedback design is therefore based on the finite-dimensional critical pair \((A_{\rm cr},B_{\rm cr})\), whose controllability is established next.

\begin{proposition}[Controllability of the critical subsystem]
\label{prop:critical-controllability}
Assume that
\[
g\in L^\infty(I;\mathbb R),
\qquad
\bigl|\{x\in I:\ g(x)\neq0\}\bigr|>0.
\]
Then the pair \((A_{\rm cr},B_{\rm cr})\) is controllable.
Consequently, there exists
\[
K_{\rm cr}\in\mathcal L(X_{\rm cr},X)
\]
such that \(A_{\rm cr}+B_{\rm cr}K_{\rm cr}\) is Hurwitz.
In particular, there exist \(M_{\rm cr}\ge1\) and \(\omega_{\rm cr}>0\) such that
\begin{equation}
\left\|
e^{(A_{\rm cr}+B_{\rm cr}K_{\rm cr})t}
\right\|_{\mathcal L(X_{\rm cr})}
\le
M_{\rm cr}e^{-\omega_{\rm cr}t},
\qquad t\ge0.
\label{eq:critical-exponential-decay}
\end{equation}
\end{proposition}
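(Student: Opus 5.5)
The plan is to reduce the statement to the Hautus (PBH) test for the finite-dimensional pair \((A_{\rm cr},B_{\rm cr})\), and then to use a unique-continuation property of the one-dimensional Sturm--Liouville eigenfunctions.

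\textbf{Reduction.} Since \(A_n\) is self-adjoint and \(P_{\rm cr}\) is its orthogonal spectral projection, \(X_{\rm cr}\) is invariant and \(A_{\rm cr}\) is self-adjoint on \(X_{\rm cr}\). Moreover \(X_{\rm cr}\) is spanned by finitely many real eigenfunctions \(\phi\in H^2(I)\cap H_0^1(I)\) satisfying
\[
\epsilon_1\phi''+\bar\alpha_1\phi=\mu\phi,
\qquad \mu>-\rho .
\]
The input space \(X\) is infinite-dimensional, but \(B_{\rm cr}\) is bounded with finite-dimensional range. Controllability of \((A_{\rm cr},B_{\rm cr})\) is therefore equivalent to the finite-dimensional rank condition that the range of the Kalman matrix \([B_{\rm cr},A_{\rm cr}B_{\rm cr},\dots]\) is all of \(X_{\rm cr}\). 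By the Hautus lemma (applied after complexification if needed), this is equivalent to
\[
\ker B_{\rm cr}^*\cap\ker(A_{\rm cr}^*-\mu)=\{0\}
\quad\text{for every eigenvalue }\mu .
\]

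\textbf{Computing the adjoint.} For \(\phi\in X_{\rm cr}\) and \(\psi\in X\),
\[
\langle P_{\rm cr}B\psi,\phi\rangle=\langle \psi, g\phi\rangle_{L^2(I)},
\]
so that \(B_{\rm cr}^*\phi=g\phi\). Since \(A_{\rm cr}^*=A_{\rm cr}\), it remains to show that \(g\phi\neq0\) in \(L^2(I)\) for every nonzero eigenvector \(\phi\). This must hold for every element of the eigenspace, although Dirichlet Sturm--Liouville eigenvalues are simple anyway.

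\textbf{Main step: zero set of eigenfunctions.} I expect this to be the only nontrivial point. A nonzero eigenfunction \(\phi\in H^2(I)\subset C^1(\bar I)\) solves a linear second-order ODE with \(L^\infty\) coefficient, namely \(\phi''=\epsilon_1^{-1}(\mu-\bar\alpha_1)\phi\), in the Carathéodory sense. By uniqueness for such ODEs, \(\phi(x_0)=\phi'(x_0)=0\) at some point would force \(\phi\equiv0\).

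Suppose the zero set of \(\phi\) had an accumulation point \(x_0\in\bar I\). Continuity gives \(\phi(x_0)=0\), and Rolle's theorem applied between consecutive zeros gives \(\phi'(x_0)=0\), a contradiction. Hence \(\phi\) has finitely many zeros in \(\bar I\), and the zero set has measure zero. Because \(\{g\neq0\}\) has positive measure, \(g\phi\) is nonzero on a set of positive measure, so \(B_{\rm cr}^*\phi\neq0\). The Hautus condition holds, and the pair is controllable.

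\textbf{Feedback and decay.} Identify \(X_{\rm cr}\cong\mathbb R^N\) through an orthonormal eigenbasis, and restrict inputs to the finite-dimensional subspace \(B^*X_{\rm cr}=gX_{\rm cr}\subset X\); on this subspace the pair is still controllable, since the PBH argument used only \(B_{\rm cr}^*\). Pole placement then gives a matrix gain, or one may take the LQR-type gain \(K_{\rm cr}=-B_{\rm cr}^*\Pi\) with \(\Pi\) the stabilizing Riccati solution. This yields \(K_{\rm cr}\in\mathcal L(X_{\rm cr},X)\), bounded because \(X_{\rm cr}\) is finite-dimensional, with \(A_{\rm cr}+B_{\rm cr}K_{\rm cr}\) Hurwitz. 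Finally, for a Hurwitz matrix, any \(\omega_{\rm cr}\) strictly below the spectral abscissa gap admits a constant \(M_{\rm cr}\ge1\) yielding \eqref{eq:critical-exponential-decay}, via the Jordan form or a Lyapunov equation.
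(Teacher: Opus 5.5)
Your proposal is correct and follows the paper's proof essentially step for step: the PBH test with $A_{\rm cr}^*=A_{\rm cr}$ and $B_{\rm cr}^*\phi=g\phi$, then ODE uniqueness for $\phi''=\epsilon_1^{-1}(\mu-\bar\alpha_1)\phi$ at an accumulation point of the zero set, then finite-dimensional pole placement. The only differences are minor: the paper obtains $\phi'(x_\ast)=0$ from a difference quotient rather than Rolle's theorem and argues directly rather than showing the zero set is finite, and your explicit restriction of the inputs to the finite-dimensional subspace $gX_{\rm cr}$ before pole placement makes precise the paper's one-line appeal to pole assignment.
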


\begin{proof}
Since \(A_n\) is self-adjoint, \(A_{\rm cr}^*=A_{\rm cr}\).
Moreover, the real-valuedness of \(g\) implies \(B^*=B\).
Hence, for \(\phi\in X_{\rm cr}\),
\[
B_{\rm cr}^*\phi
=
B^*P_{\rm cr}\phi
=
g\phi.
\]
By the finite-dimensional Popov--Belevitch--Hautus (PBH) controllability criterion, it is enough to show that
\begin{equation}
\ker(\lambda I-A_{\rm cr}^*)
\cap
\ker B_{\rm cr}^*
=
\{0\},
\qquad
\lambda\in\sigma(A_{\rm cr}).
\label{eq:PBH-critical}
\end{equation}

Let \(\phi\) belong to the intersection in \eqref{eq:PBH-critical}. Then
\[
A_n\phi=\lambda\phi,
\qquad
g\phi=0
\quad\text{a.e. on }I.
\]
Thus \(\phi=0\) a.e. on
\[
E_g:=\{x\in I:\ g(x)\neq0\},
\qquad
|E_g|>0.
\]
Since
\[
\phi\in H^2(I)\cap H_0^1(I)
\hookrightarrow C^1(\overline I),
\]
the set of pointwise zeros of \(\phi\) has positive measure and hence has an accumulation point \(x_\ast\in I\).
Choosing distinct zeros \(x_n\to x_\ast\), continuity gives \(\phi(x_\ast)=0\), while
\[
\phi'(x_\ast)
=
\lim_{n\to\infty}
\frac{\phi(x_n)-\phi(x_\ast)}
{x_n-x_\ast}
=
0.
\]
Using the definition of \(A_n\), the eigenvalue equation becomes
\[
\epsilon_1\phi''+\bar\alpha_1\phi=\lambda\phi,
\]
or equivalently,
\[
\phi''
=
\frac{\lambda-\bar\alpha_1}{\epsilon_1}\phi
\quad\text{a.e. on }I,
\]
where the coefficient belongs to \(L^\infty(I)\).
Writing this equation as a first-order system for \((\phi,\phi')\), uniqueness of the corresponding linear initial-value problem with
\[
\phi(x_\ast)=\phi'(x_\ast)=0
\]
yields \(\phi\equiv0\). Hence \eqref{eq:PBH-critical} holds, and the pair \((A_{\rm cr},B_{\rm cr})\) is controllable. The existence of \(K_{\rm cr}\) and the exponential estimate then follow from finite-dimensional pole assignment.
\end{proof}

The critical feedback must next be lifted to the full shoreline space.
Since multiplication by \(g\) need not preserve \(X_{\rm cr}\), the feedback action may generate a component in \(X_{\rm st}\). This residual component must be accounted for when passing from critical-mode stabilization to the full shoreline dynamics.

\begin{theorem}[Finite-to-infinite lifting of critical-mode stabilization]
\label{thm:finite-infinite-lifting}
Let \(K_{\rm cr}\) satisfy \eqref{eq:critical-exponential-decay}.
Define
\begin{equation}
A_F
:=
A_n+BK_{\rm cr}P_{\rm cr},
\qquad
D(A_F)=D(A_n).
\label{eq:AF-lifting}
\end{equation}
Then \(A_F\) generates an analytic exponentially stable \(C_0\)-semigroup on \(X\). More precisely, for every
\[
0<\omega<\min\{\rho,\omega_{\rm cr}\},
\]
there exists \(M_F\ge1\) such that
\begin{equation}
\|e^{A_Ft}\|_{\mathcal L(X)}
\le
M_Fe^{-\omega t},
\qquad t\ge0.
\label{eq:AF-exp-lifting}
\end{equation}
\end{theorem}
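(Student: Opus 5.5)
Generation is immediate: \(BK_{\rm cr}P_{\rm cr}\in\mathcal L(X)\) is a bounded operator, since \(K_{\rm cr}\) acts on the finite-dimensional space \(X_{\rm cr}\) and \(B\) is multiplication by \(g\in L^\infty(I)\). The bounded-perturbation theorem for analytic semigroups \cite{Pazy1983}, already used for \(A_n\), then shows that \(A_F\) generates an analytic \(C_0\)-semigroup on \(X\) with \(D(A_F)=D(A_n)\). The remaining task is the decay estimate \eqref{eq:AF-exp-lifting}, which I will derive from a block-triangular structure in the decomposition \(X=X_{\rm cr}\oplus X_{\rm st}\).

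For \(z\in D(A_n)\), write \(z_{\rm cr}=P_{\rm cr}z\) and \(z_{\rm st}=P_{\rm st}z\). The projection \(P_{\rm cr}\) is spectral for the self-adjoint operator \(A_n\), so it commutes with \(A_n\). The equation \(\dot z=A_Fz\) therefore decouples into
\[
\dot z_{\rm cr}=(A_{\rm cr}+B_{\rm cr}K_{\rm cr})z_{\rm cr},
\qquad
\dot z_{\rm st}=A_{\rm st}z_{\rm st}+P_{\rm st}BK_{\rm cr}z_{\rm cr}.
\]
The critical equation is closed, and the feedback enters the stable component only as a bounded forcing by \(z_{\rm cr}\). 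This is the residual spill-over noted before the theorem. By \eqref{eq:critical-exponential-decay},
\[
\|z_{\rm cr}(t)\|\le M_{\rm cr}e^{-\omega_{\rm cr}t}\|P_{\rm cr}z_0\|.
\]
The variation-of-constants formula for the stable component, combined with \eqref{eq:stable-residual-decay}, gives
\[
\|z_{\rm st}(t)\|\le e^{-\rho t}\|P_{\rm st}z_0\|
+\kappa M_{\rm cr}\|P_{\rm cr}z_0\|\int_0^t e^{-\rho(t-s)}e^{-\omega_{\rm cr}s}\,ds,
\qquad
\kappa:=\|P_{\rm st}BK_{\rm cr}\|.
\]

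The main obstacle is the convolution term: when \(\rho=\omega_{\rm cr}\) it produces a factor \(te^{-\rho t}\), which is exactly why the theorem only claims decay rates \(\omega\) strictly below \(\min\{\rho,\omega_{\rm cr}\}\). I will bound the integrand by \(e^{-\omega t}e^{-(\rho-\omega)(t-s)}e^{-(\omega_{\rm cr}-\omega)s}\le e^{-\omega t}e^{-(\rho-\omega)(t-s)}\). This yields
\[
\int_0^t e^{-\rho(t-s)}e^{-\omega_{\rm cr}s}\,ds\le \frac{e^{-\omega t}}{\rho-\omega}.
\]
Adding the two components, and using \(\|P_{\rm cr}z_0\|,\|P_{\rm st}z_0\|\le\|z_0\|\) (orthogonality), gives \eqref{eq:AF-exp-lifting} with
\[
M_F=1+M_{\rm cr}+\frac{\kappa M_{\rm cr}}{\rho-\omega}\ge 1.
\]
The estimate first holds for \(z_0\in D(A_n)\), where the decomposed equations are classical. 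It extends to all of \(X\) because \(D(A_n)\) is dense and \(e^{A_Ft}\) is bounded.

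One point to check carefully is that the mild solution of the triangular system coincides with \(e^{A_Ft}z_0\). This follows because both are the unique mild solution of \(\dot z=A_Fz\), and \(P_{\rm cr}\), \(P_{\rm st}\) commute with \(e^{A_nt}\). A second point is that \(X_{\rm cr}\) is \(A_n\)-invariant and contained in \(D(A_n)\) (it is spanned by eigenfunctions), so \(A_{\rm cr}\) is a matrix and the finite-dimensional estimate applies directly.
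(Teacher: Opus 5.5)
Your proposal is correct and follows the paper's proof essentially step for step: generation by bounded perturbation, the lower block-triangular form with spill-over term \(P_{\rm st}BK_{\rm cr}\), and variation of constants for the stable component. The only differences are cosmetic. You bound the convolution integral directly by \(e^{-\omega t}/(\rho-\omega)\), where the paper uses \(te^{-mt}\le e^{-\omega t}/(e(m-\omega))\). Your remarks on density and on identifying the mild solution with \(e^{A_Ft}z_0\) fill in details the paper leaves implicit.
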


\begin{proof}
Since \(BK_{\rm cr}P_{\rm cr}\in\mathcal L(X)\), the standard bounded-perturbation theorem for analytic semigroups \cite{Pazy1983} implies that \(A_F\), with \(D(A_F)=D(A_n)\), generates an analytic semigroup on \(X\).

Relative to \(X=X_{\rm cr}\oplus X_{\rm st}\),
\[
A_F
=
\begin{pmatrix}
A_{\rm cr}+B_{\rm cr}K_{\rm cr} & 0\\[1mm]
L_{\rm sc} & A_{\rm st}
\end{pmatrix},
\qquad
L_{\rm sc}:=P_{\rm st}BK_{\rm cr},
\]
with domain \(X_{\rm cr}\oplus D(A_{\rm st})\).
Indeed, the spectral projections commute with \(A_n\), while \(P_{\rm cr}P_{\rm st}=0\). Hence, for \(q=q_{\rm cr}+q_{\rm st}\),
\[
\dot q_{\rm cr}
=
(A_{\rm cr}+B_{\rm cr}K_{\rm cr})q_{\rm cr},
\qquad
\dot q_{\rm st}
=
A_{\rm st}q_{\rm st}+L_{\rm sc}q_{\rm cr}.
\]

Using \eqref{eq:stable-residual-decay}, \eqref{eq:critical-exponential-decay}, and variation of constants gives
\[
\begin{aligned}
\|q_{\rm st}(t)\|_X
&\le
e^{-\rho t}\|q_{\rm st}(0)\|_X\\
&\quad+
M_{\rm cr}\|L_{\rm sc}\|
\int_0^t
e^{-\rho(t-s)}e^{-\omega_{\rm cr}s}\,ds\,
\|q_{\rm cr}(0)\|_X.
\end{aligned}
\]
Setting \(m=\min\{\rho,\omega_{\rm cr}\}\), one has
\[
\int_0^t
e^{-\rho(t-s)}e^{-\omega_{\rm cr}s}\,ds
\le
t e^{-mt}.
\]
For every \(0<\omega<m\),
\[
t e^{-mt}
=
t e^{-(m-\omega)t}e^{-\omega t}
\le
\frac{1}{e(m-\omega)}e^{-\omega t}.
\]
Together with the critical-mode estimate and the orthogonal decomposition of \(X\), this yields \eqref{eq:AF-exp-lifting}.
\end{proof}

Fix
\[
0<\omega_F<\min\{\rho,\omega_{\rm cr}\},
\]
and let \(M_F\ge1\) denote the corresponding constant in \eqref{eq:AF-exp-lifting}.

The actuator delay is incorporated into the feedback design through the transport state
\[
z(\theta,t)
=
U(t-D+\theta),
\qquad
\theta\in(0,D),
\]
which satisfies
\begin{equation}
\begin{cases}
z_t(\theta,t)=z_\theta(\theta,t),
&(\theta,t)\in(0,D)\times\mathbb R_+,\\
z(D,t)=U(t),\\
z(0,t)=U(t-D).
\end{cases}
\label{eq:transport}
\end{equation}
Equation \eqref{eq:shoreline-residual-form} becomes
\begin{equation}
\dot u
=
A_nu+Bz(0,t)+\mathfrak d.
\label{eq:shoreline_transport}
\end{equation}

Define the predictor state
\begin{equation}
Y(t)
=
e^{A_nD}u(t)
+
\int_0^D
e^{A_n(D-\theta)}
Bz(\theta,t)\,d\theta.
\label{eq:predictor}
\end{equation}
Its initial value is determined by the plant state and input history as
\begin{equation}
Y(0)
=
e^{A_nD}u_0
+
\int_0^D
e^{A_n(D-\theta)}
B\,U_0(-D+\theta)\,d\theta.
\label{eq:predictor-initial}
\end{equation}
The next proposition shows that the predictor transformation removes the input delay from the nominal shoreline dynamics while preserving the residual term \(\mathfrak d\).

\begin{proposition}[Predictor dynamics]
\label{prop:predictor-dynamics}
Let \(u\) and \(z\) satisfy \eqref{eq:transport}--\eqref{eq:shoreline_transport}.
Then the predictor state \eqref{eq:predictor} satisfies
\begin{equation}
\dot Y
=
A_nY+BU(t)+e^{A_nD}\mathfrak d(t).
\label{eq:predictor-dynamics}
\end{equation}
The identity holds classically for sufficiently regular solutions and extends to energy solutions in the mild sense.
\end{proposition}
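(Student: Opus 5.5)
The plan is to differentiate the predictor \eqref{eq:predictor} directly for regular data and then pass to mild solutions by density. Take \(u_0\in D(A_n)\) and transport data \(z(\cdot,0)\) in \(H^1(0,D;X)\) compatible with \(z(D,0)=U(0)\), with \(U\) smooth in time. Then \(u\in C^1([0,T];X)\cap C([0,T];D(A_n))\) solves \eqref{eq:shoreline_transport} classically, and \(z_t=z_\theta\) holds in \(C([0,T];X)\). Since \(e^{A_nD}\in\mathcal L(X)\) commutes with \(A_n\) on \(D(A_n)\), the first term gives
\[
\frac{d}{dt}e^{A_nD}u=A_ne^{A_nD}u+e^{A_nD}Bz(0,t)+e^{A_nD}\mathfrak d(t).
\]

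For the integral term I would differentiate under the integral and use the transport equation to replace \(z_t\) by \(z_\theta\). Integrating by parts in \(\theta\) then gives
\[
\int_0^De^{A_n(D-\theta)}Bz_\theta\,d\theta
=Bz(D,t)-e^{A_nD}Bz(0,t)+\int_0^DA_ne^{A_n(D-\theta)}Bz(\theta,t)\,d\theta .
\]
Here I use \(\partial_\theta e^{A_n(D-\theta)}\psi=-A_ne^{A_n(D-\theta)}\psi\). This holds for \(\theta<D\) for any \(\psi\in X\), because the semigroup is analytic. Near \(\theta=D\) the integrand behaves like \(O((D-\theta)^{-1})\). To handle this, I would either assume \(Bz(\theta,t)\in D(A_n)\) for the regular class (e.g. \(g\) smooth, or approximate), or more robustly write the integral as \(A_n\int_0^De^{A_n(D-\theta)}Bz\,d\theta\). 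The latter is legitimate since \(z\in H^1\) in \(\theta\), so this convolution lies in \(D(A_n)\); the boundary term is obtained by integrating by parts the closed-operator identity. With \(z(D,t)=U(t)\), adding the two pieces cancels the \(e^{A_nD}Bz(0,t)\) terms. What remains is \(A_n\) applied to both parts of \(Y\), plus \(BU(t)\), plus \(e^{A_nD}\mathfrak d(t)\), which is \eqref{eq:predictor-dynamics}.

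For the mild extension, I would prove the equivalent integral identity
\[
Y(t)=e^{A_nt}Y(0)+\int_0^te^{A_n(t-s)}\bigl(BU(s)+e^{A_nD}\mathfrak d(s)\bigr)ds .
\]
I would verify it directly by substituting the mild formula for \(u\) and the shift representation \(z(\theta,t)=U(t-D+\theta)\), using the semigroup property and a change of variables \(s'=s-D+\theta\). This avoids regularity issues, since everything is Bochner integrals of bounded operators applied to \(L^2\) functions. Alternatively, continuous dependence of both sides on \((u_0,U_0,U,\mathfrak d)\) in \(X\times L^2(-D,0;X)\times L^2_{\rm loc}(0,\infty;X)\times L^2_{\rm loc}(0,\infty;X)\) extends the identity by density. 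For energy solutions \(\mathfrak d\in L^2_{\rm loc}(X)\) by Lemma~\ref{lem:bounded_coupling}.

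The main obstacle is the justification of the integration by parts near \(\theta=D\), where \(A_ne^{A_n(D-\theta)}\) is not integrable in operator norm. For this reason I would make the change-of-variables verification of the mild identity the primary argument and keep the classical computation as the formal derivation.
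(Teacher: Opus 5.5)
Your proposal is correct. Its classical part coincides with the paper's: differentiate \eqref{eq:predictor}, replace $z_t$ by $z_\theta$, integrate by parts with $z(D,t)=U(t)$, and cancel the $e^{A_nD}Bz(0,t)$ terms. You worry about the non-integrable $O((D-\theta)^{-1})$ singularity, but the paper's formula already keeps $A_n$ outside the integral, $A_n\int_0^De^{A_n(D-\theta)}Bz(\theta,t)\,d\theta$. That form is legitimate because $Bz(\cdot,t)\in W^{1,1}(0,D;X)$, so the paper resolves the issue exactly as your "more robust" option does.

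The genuine difference is the passage to energy solutions. The paper only asserts that the identity extends ``by approximation of the initial data and input history'' through the variation-of-constants formula. You instead verify that formula directly:
\begin{itemize}
\item the terms $\int_0^te^{A_n(t+D-s)}BU(s-D)\,ds$ and $\int_0^De^{A_n(D-\theta)}BU(t-D+\theta)\,d\theta$ merge into $\int_{-D}^{t}e^{A_n(t-r)}BU(r)\,dr$;
\item the part of this integral over $(-D,0)$ combines with $e^{A_n(t+D)}u_0$ into $e^{A_nt}Y(0)$;
\item the $\mathfrak d$ term is already $\int_0^te^{A_n(t-s)}e^{A_nD}\mathfrak d(s)\,ds$.
\end{itemize}
This treats $\mathfrak d$ as a fixed forcing in $L^2_{\rm loc}(0,\infty;X)$. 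It uses only that the energy solution $u$ is a mild solution of \eqref{eq:shoreline_transport}, which is the standard weak/mild equivalence for $L^2$ forcing.

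What your route buys is that it needs no continuity of the coupled closed-loop solution map with respect to the data. It also sidesteps a point the paper's one-line density argument leaves implicit. With $\alpha_1$ and $\lambda_1$ only $L^\infty$ in time, $\mathfrak d$ need not be continuous in time. So smoothing the initial data and input history does not by itself give classical solutions. The same caveat applies to the regular class in your first paragraph. That is harmless, since you use it only as a formal derivation, and your alternative density argument correctly approximates $\mathfrak d$ as well. The paper's route is shorter and treats the classical identity as the primary object; yours makes the mild identity self-contained.
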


\begin{proof}
For sufficiently regular solutions, differentiation of \eqref{eq:predictor} and \(z_t=z_\theta\) gives
\[
\dot Y
=
e^{A_nD}\dot u
+
\int_0^D
e^{A_n(D-\theta)}Bz_\theta(\theta,t)\,d\theta.
\]
Integration by parts and \(z(D,t)=U(t)\) yield
\[
\begin{aligned}
\int_0^D
e^{A_n(D-\theta)}Bz_\theta\,d\theta
&=
BU(t)-e^{A_nD}Bz(0,t)\\
&\quad+
A_n\int_0^D
e^{A_n(D-\theta)}
Bz(\theta,t)\,d\theta.
\end{aligned}
\]
Substituting \eqref{eq:shoreline_transport} gives
\[
\begin{aligned}
\dot Y
&=
e^{A_nD}A_nu
+e^{A_nD}Bz(0,t)
+e^{A_nD}\mathfrak d(t)\\
&\quad+
BU(t)-e^{A_nD}Bz(0,t)\\
&\quad+
A_n\int_0^D
e^{A_n(D-\theta)}
Bz(\theta,t)\,d\theta.
\end{aligned}
\]
The terms containing \(Bz(0,t)\) cancel. Using
\[
e^{A_nD}A_nu=A_ne^{A_nD}u
\]
and the definition \eqref{eq:predictor} gives
\[
\dot Y
=
A_nY+BU(t)+e^{A_nD}\mathfrak d(t),
\]
which is \eqref{eq:predictor-dynamics}.

By approximation of the initial data and input history, the identity extends to energy solutions through the variation-of-constants formula
\[
\begin{aligned}
Y(t)
&=
e^{A_nt}Y(0)\\
&\quad+
\int_0^t e^{A_n(t-s)}
\bigl[
BU(s)+e^{A_nD}\mathfrak d(s)
\bigr]\,ds.
\end{aligned}
\]
\end{proof}

The explicit input delay is thus removed from the predictor dynamics.
With the bounded lifted feedback operator
\[
F:=K_{\rm cr}P_{\rm cr}\in\mathcal L(X),
\]
choose
\begin{equation}
U(t)=FY(t)=K_{\rm cr}P_{\rm cr}Y(t).
\label{eq:predictor-feedback}
\end{equation}
Substitution into \eqref{eq:predictor-dynamics}, together with the definition \eqref{eq:AF-lifting}, gives
\begin{equation}
\dot Y
=
A_FY+e^{A_nD}\mathfrak d,
\label{eq:predictor-closed-loop}
\end{equation}
where, by Proposition~\ref{prop:critical-controllability} and Theorem~\ref{thm:finite-infinite-lifting}, \(A_F\) generates an exponentially stable analytic semigroup satisfying \eqref{eq:AF-exp-lifting}.
It remains to establish that the resulting predictor-based interconnection defines a globally well-posed closed-loop evolution.

\begin{theorem}[Global well-posedness of the closed-loop system]
\label{thm:global-wellposedness}
Under the coefficient, actuator, and coupling conditions of Section~\ref{sec:model}, let \(F=K_{\rm cr}P_{\rm cr}\) be defined by \eqref{eq:predictor-feedback}. For every
\[
(u_0,v_0)\in\mathcal H,
\qquad
U_0\in L^2(-D,0;X),
\]
the predictor feedback generates a unique global energy solution such that, for every \(T>0\),
\[
(u,v)
\in
L^2(0,T;\mathcal V)
\cap
H^1(0,T;\mathcal V')
\cap
C([0,T];\mathcal H),
\]
\[
U\in L^2(-D,T;X),
\;
z\in C([0,T];\mathcal Z),
\;
\mathcal Z:=L^2((0,D);X),
\]
where \(z(\theta,t)=U(t-D+\theta)\).
Moreover, there exists \(C_T>0\) such that
\begin{align}
&\sup_{0\le t\le T}
\|(u(t),v(t))\|_{\mathcal H}^2
+
\int_0^T
\|(u(t),v(t))\|_{\mathcal V}^2\,dt
\nonumber\\
&\quad+
\|U\|_{L^2(0,T;X)}^2
+
\sup_{0\le t\le T}\|z(t)\|_{\mathcal Z}^2
\nonumber\\
&\le
C_T
\left(
\|(u_0,v_0)\|_{\mathcal H}^2
+
\|U_0\|_{L^2(-D,0;X)}^2
\right).
\label{eq:closed-loop-finite-time-estimate}
\end{align}
\end{theorem}

\begin{proof}
For \(\xi=(u,v)\) and \(\eta=(\phi,\psi)\) in \(\mathcal V\), define
\begin{align}
a_t(\xi,\eta)
&=
\epsilon_1\langle u_x,\phi_x\rangle_{L^2(I)}
+
\epsilon_2\langle v_x,\psi_x\rangle_{L^2(\Omega)}
\nonumber\\
&\quad+
\bar\epsilon_2\langle v_y,\psi_y\rangle_{L^2(\Omega)}
-
\langle\alpha_1u,\phi\rangle_{L^2(I)}
\nonumber\\
&\quad-
\langle\alpha_2v,\psi\rangle_{L^2(\Omega)}
+
\langle C_{12}(t)v,\phi\rangle_{L^2(I)}
\nonumber\\
&\quad+
\langle C_{21}(t)u,\psi\rangle_{L^2(\Omega)}.
\label{eq:closed-loop-variational-form}
\end{align}
Lemma~\ref{lem:bounded_coupling}, boundedness of the reaction coefficients, Young's inequality, and Poincar\'e's inequality for the shoreline component imply
\[
|a_t(\xi,\eta)|
\le
C_a\|\xi\|_{\mathcal V}\|\eta\|_{\mathcal V}
\]
and, for suitable \(c_{\mathcal V}>0\), \(c_{\mathcal H}\ge0\),
\begin{equation}
a_t(\xi,\xi)
+
c_{\mathcal H}\|\xi\|_{\mathcal H}^2
\ge
c_{\mathcal V}\|\xi\|_{\mathcal V}^2.
\label{eq:garding-closed-loop}
\end{equation}
Thus, for every prescribed \(\widehat U\in L^2(-D,T;X)\), standard variational theory for nonautonomous parabolic equations \cite{LionsMagenes1972,DautrayLions1992} provides a unique energy solution and
\begin{equation}
\begin{aligned}
\sup_{0\le t\le T}\|\xi(t)\|_{\mathcal H}^2
&+
\int_0^T\|\xi(t)\|_{\mathcal V}^2\,dt\\
&\le
C_T
\left(
\|\xi_0\|_{\mathcal H}^2
+
\|\widehat U\|_{L^2(-D,T-D;X)}^2
\right).
\end{aligned}
\label{eq:prescribed-input-estimate}
\end{equation}

Using \eqref{eq:predictor} and the transport representation, the feedback can equivalently be written as the causal Volterra equation
\begin{equation}
U(t)
=
Fe^{A_nD}u(t)
+
\int_{t-D}^{t}
Fe^{A_n(t-r)}BU(r)\,dr.
\label{eq:feedback-volterra-equation}
\end{equation}
Set
\[
K_D(s)
=
\mathbf 1_{[0,D]}(s)Fe^{A_ns}B,
\qquad s\ge0.
\]
Then \eqref{eq:feedback-volterra-equation} can be written, for \(t\ge0\), as
\[
U(t)
=
Fe^{A_nD}u(t)
+
\int_0^tK_D(t-r)U(r)\,dr
+
h_0(t),
\]
where
\[
h_0(t)
=
\int_{-D}^{0}
K_D(t-r)U_0(r)\,dr.
\]
Since
\[
K_D\in L^1(0,T;\mathcal L(X)),
\]
Young's convolution inequality gives
\[
\|h_0\|_{L^2(0,T;X)}
\le
\|K_D\|_{L^1(0,T;\mathcal L(X))}
\|U_0\|_{L^2(-D,0;X)}.
\]
The kernel \(K_D\) is bounded on finite intervals.
The Volterra equation is therefore uniquely solvable on sufficiently short intervals by contraction, and successive continuation covers \([0,T]\). The corresponding finite-time estimate gives
\begin{equation}
\|U\|_{L^2(0,T;X)}
\le
C_T
\left(
\|u\|_{L^2(0,T;X)}
+
\|U_0\|_{L^2(-D,0;X)}
\right).
\label{eq:feedback-L2-estimate}
\end{equation}

The coupled plant--Volterra system is causal.
On the first interval \([0,D]\), the delayed plant input is prescribed entirely by the history \(U_0\).
Hence the variational problem uniquely determines \((u,v)\) on \([0,D]\), after which the Volterra equation uniquely determines \(U\) on the same interval. Inductively, assume that \(U\) has been constructed on \([-D,kD]\). For \(t\in[kD,(k+1)D]\), one has
\[
t-D\in[(k-1)D,kD],
\]
so that the delayed plant input \(U(t-D)\) is already known. The variational problem therefore uniquely determines \((u,v)\) on \([kD,(k+1)D]\), and the Volterra equation subsequently determines \(U\) there.
The estimates \eqref{eq:prescribed-input-estimate} and \eqref{eq:feedback-L2-estimate} propagate through this induction.

Thus, for every finite \(T>0\), the closed-loop system admits a unique energy solution on \([0,T]\), with a bound whose constant depends on \(T\), the system coefficients, the delay, and the fixed feedback operator, but not on the initial data.

Finally,
\[
\|z(t)\|_{\mathcal Z}^2
=
\int_{t-D}^{t}\|U(r)\|_X^2\,dr,
\]
with \(U=U_0\) on \([-D,0]\). Hence
\[
\sup_{0\le t\le T}\|z(t)\|_{\mathcal Z}^2
\le
\|U_0\|_{L^2(-D,0;X)}^2
+
\|U\|_{L^2(0,T;X)}^2,
\]
and continuity of translations in \(L^2\) gives \(z\in C([0,T];\mathcal Z)\). Combining the preceding estimates proves \eqref{eq:closed-loop-finite-time-estimate}.
\end{proof}

\subsection{Closed-Loop Stability Analysis}
\label{subsec:closed-loop-stability}

The remaining closed-loop interaction is carried by
\[
\mathfrak d(t)
=
\widetilde\alpha_1(\cdot,t)u(t)
-
C_{12}(t)v(t),
\]
where
\[
\widetilde\alpha_1(x,t)
=
\alpha_1(x,t)-\bar\alpha_1(x).
\]
By Lemma~\ref{lem:bounded_coupling},
\[
\|\mathfrak{d}(t)\|_X
\le
m_\alpha\|u(t)\|_X
+
M_{12}\|v(t)\|_{H^1(\Omega)},
\]
where
\[
m_\alpha
:=
\|\widetilde\alpha_1\|_{L^\infty(I\times\mathbb R_+)}.
\]
Theorem~\ref{thm:global-wellposedness} therefore implies
\[
\mathfrak d\in L^2(0,T;X)
\qquad
\text{for every }T>0.
\]

For a Hilbert space \(E\) and \(\theta>0\), define
\[
\|f\|_{L^2_\theta(0,T;E)}
:=
\left(
\int_0^T e^{2\theta t}\|f(t)\|_E^2\,dt
\right)^{1/2}.
\]
The two weighted mappings \(\mathfrak d\mapsto u_x\) and \(u_x\mapsto\mathfrak d\) are estimated separately below.

\begin{proposition}[Weighted predictor and shoreline-gradient estimate]
\label{prop:weighted-shoreline-gain}
Let
\[
0<\theta<\omega_F,
\]
and let
\[
\dot Y=A_FY+e^{A_nD}\mathfrak d,
\qquad
Y(0)=Y_0,
\]
where
\[
\|e^{A_Ft}\|_{\mathcal L(X)}
\le
M_Fe^{-\omega_Ft}.
\]
Then, for every \(T>0\),
\begin{equation}
\label{eq:predictor-weighted-estimate}
\|Y\|_{L^2_\theta(0,T;X)}
\le
C_{Y0}(\theta)\|Y_0\|_X
+
G_{Y\mathfrak d}(\theta)
\|\mathfrak d\|_{L^2_\theta(0,T;X)},
\end{equation}
where
\[
C_{Y0}(\theta)
=
\frac{M_F}{\sqrt{2(\omega_F-\theta)}},
\qquad
G_{Y\mathfrak d}(\theta)
=
\frac{M_F\|e^{A_nD}\|_{\mathcal L(X)}}
{\omega_F-\theta}.
\]
Moreover, there exist constants \(C_{Y,1}(\theta),G_{Y\mathfrak d}^{(1)}(\theta)>0\), independent of \(T\), such that
\begin{equation}
\label{eq:weighted-Y-H1}
\|Y\|_{L^2_\theta(0,T;H_0^1(I))}
\le
C_{Y,1}(\theta)\|Y_0\|_X
+
G_{Y\mathfrak d}^{(1)}(\theta)
\|\mathfrak d\|_{L^2_\theta(0,T;X)}.
\end{equation}
For every \(T>D\),
\begin{align}
\|u_x\|_{L^2_\theta(D,T;L^2(I))}
&\le
e^{\theta D}C_{Y,1}(\theta)\|Y_0\|_X
\nonumber\\
&\quad+
G_{u\mathfrak d}(\theta)
\|\mathfrak d\|_{L^2_\theta(0,T;X)},
\label{eq:weighted-ux-gain}
\end{align}
where
\begin{equation}
\label{eq:Gud}
G_{u\mathfrak d}(\theta)
=
e^{\theta D}G_{Y\mathfrak d}^{(1)}(\theta)+G_D(\theta),
\end{equation}
with
\begin{equation}
\label{eq:GD}
G_D(\theta)
=
C_n\int_0^D
r^{-1/2}e^{(\omega_n+\theta)r}\,dr
<\infty.
\end{equation}
\end{proposition}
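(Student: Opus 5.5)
The plan is to work with the weighted variable \(Y_\theta(t):=e^{\theta t}Y(t)\), which solves
\[
\dot Y_\theta=(A_F+\theta)Y_\theta+e^{\theta t}e^{A_nD}\mathfrak d(t).
\]
Because \(\|e^{(A_F+\theta)t}\|\le M_Fe^{-(\omega_F-\theta)t}\) by \eqref{eq:AF-exp-lifting}, the variation-of-constants formula splits \(Y_\theta\) into a free response and a convolution term.

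\emph{Estimate \eqref{eq:predictor-weighted-estimate}.}
\begin{itemize}
\item The free response has \(L^2(0,T)\) norm at most \(M_F\|Y_0\|_X\bigl(\int_0^\infty e^{-2(\omega_F-\theta)t}dt\bigr)^{1/2}\), which gives \(C_{Y0}(\theta)\).
\item The convolution term is bounded by Young's convolution inequality with the \(L^1\) kernel \(M_F\|e^{A_nD}\|e^{-(\omega_F-\theta)t}\). Its \(L^1(0,\infty)\) norm is exactly \(G_{Y\mathfrak d}(\theta)\).
\end{itemize}
All constants are independent of \(T\).

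\emph{Estimate \eqref{eq:weighted-Y-H1}.} I will use the analytic (maximal-regularity) structure. Since \(A_n\) is self-adjoint and bounded above, pick \(\mu>\sup\sigma(A_n)\). Then \(\|\phi\|_{H_0^1}\simeq\|(\mu-A_n)^{1/2}\phi\|_X\), because the form domain of \(A_n\) is \(H_0^1(I)\). The operator \(A_F+\theta\) is a bounded perturbation of \(A_n\) and generates an exponentially stable analytic semigroup. This gives
\[
\|(\mu-A_n)^{1/2}e^{(A_F+\theta)t}\|\le C t^{-1/2}e^{-\delta t}
\]
for some \(\delta>0\) and a constant \(C\) depending on \(\theta\).
\begin{itemize}
\item The forcing term is then handled by Young's inequality with the \(L^1\) kernel \(t^{-1/2}e^{-\delta t}\).
\item For the free term, \(t^{-1/2}\) is not square integrable, so that route fails. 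Instead I will use the standard \(L^2(0,\infty;H_0^1)\) bound for parabolic evolutions with \(L^2\) initial data. It comes from the energy identity \(\tfrac12\tfrac{d}{dt}\|Y_\theta\|^2+\epsilon_1\|\partial_xY_\theta\|^2=\langle(\bar\alpha_1+\theta+BF)Y_\theta,Y_\theta\rangle\). Integrating this identity controls \(\int\|\partial_xY_\theta\|^2\) by \(\|Y_0\|^2\) plus a multiple of \(\|Y_\theta\|_{L^2}^2\), and the latter is already estimated.
\end{itemize}
Applying the same energy argument to the forced equation also handles the forcing term, and this gives \(C_{Y,1}\) and \(G^{(1)}_{Y\mathfrak d}\).

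\emph{Recovering \(u\) from \(Y\) (the main obstacle).} For \(t\ge D\), write the mild solution of \eqref{eq:shoreline-residual-form} on \([t-D,t]\):
\[
u(t)=e^{A_nD}u(t-D)+\int_{t-D}^te^{A_n(t-s)}\bigl[BU(s-D)+\mathfrak d(s)\bigr]ds .
\]
Comparing with \eqref{eq:predictor} evaluated at \(t-D\), with \(z(\theta,t-D)=U(t-2D+\theta)\), gives the identity
\[
u(t)=Y(t-D)+\int_{t-D}^te^{A_n(t-s)}\mathfrak d(s)\,ds ,
\]
which says the predictor predicts \(u\) up to the disturbance accumulated over one delay interval. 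The key steps are then:
\begin{enumerate}
\item Differentiate in \(x\). For the first term, \(\|Y(\cdot-D)\|_{L^2_\theta(D,T;H_0^1)}\le e^{\theta D}\|Y\|_{L^2_\theta(0,T-D;H_0^1)}\), so \eqref{eq:weighted-Y-H1} supplies the \(e^{\theta D}\) factors in \eqref{eq:weighted-ux-gain} and \eqref{eq:Gud}.
\item For the integral term, use the analytic smoothing bound \(\|\partial_xe^{A_nr}\|_{\mathcal L(X)}\le C_nr^{-1/2}e^{\omega_nr}\), where \(\omega_n\) dominates \(\sup\sigma(A_n)\).
\item Write \(e^{\theta t}=e^{\theta(t-s)}e^{\theta s}\). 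The weighted integral term then becomes a convolution of \(e^{\theta s}\|\mathfrak d(s)\|\) with the kernel \(C_nr^{-1/2}e^{(\omega_n+\theta)r}\mathbf 1_{[0,D]}(r)\).
\item By Young's inequality, the gain is the \(L^1\) norm of this kernel, which is \(G_D(\theta)\). It is finite because \(r^{-1/2}\) is integrable near \(0\).
\end{enumerate}
The delicate points are this identity, checked for regular solutions and extended by density as in Proposition~\ref{prop:predictor-dynamics}, and the \(H^1\) smoothing estimate for a nonstable \(A_n\) on a bounded window. Both are manageable because \(D\) is finite.
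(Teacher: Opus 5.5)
Your proposal is correct and follows essentially the same route as the paper. The three steps match: variation of constants with Young's convolution inequality for the weighted \(L^2\) bound; an energy estimate for \(Y\), combined with that bound and Poincar\'e, for the \(H_0^1\) bound; and the representation \(u(t)=Y(t-D)+\int_{t-D}^{t}e^{A_n(t-s)}\mathfrak d(s)\,ds\) together with the analytic smoothing estimate and Young's inequality with the kernel \(C_nr^{-1/2}e^{(\omega_n+\theta)r}\mathbf 1_{(0,D)}(r)\) for the shoreline gradient; in addition, you derive that representation explicitly from the mild formulation on \([t-D,t]\) and \(Y(t-D)\), which the paper only invokes as ``the predictor identity''.
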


\begin{proof}
The variation-of-constants formula and the exponential stability of \(A_F\) give
\[
\begin{aligned}
e^{\theta t}\|Y(t)\|_X
&\le
M_Fe^{-(\omega_F-\theta)t}\|Y_0\|_X\\
&+
M_F\|e^{A_nD}\|_{\mathcal L(X)}
\int_0^t
e^{-(\omega_F-\theta)(t-s)}
e^{\theta s}\|\mathfrak d(s)\|_X\,ds.
\end{aligned}
\]
Young's convolution inequality yields \eqref{eq:predictor-weighted-estimate} with the stated constants.

Since
\[
A_F=A_n+BF,
\qquad
BF\in\mathcal L(X),
\]
the bounded feedback term is of lower order with respect to the Dirichlet diffusion operator. Testing the predictor equation against \(Y\), multiplying by \(e^{2\theta t}\), and using Young's inequality gives
\[
\begin{aligned}
\epsilon_1
\|Y_x\|_{L^2_\theta(0,T;L^2(I))}^2
&\le
\frac12\|Y_0\|_X^2
+
(c_F+\theta)
\|Y\|_{L^2_\theta(0,T;X)}^2\\
&\quad+
c_{\mathfrak d}
\|\mathfrak d\|_{L^2_\theta(0,T;X)}^2,
\end{aligned}
\]
for constants \(c_F,c_{\mathfrak d}>0\) independent of \(T\).
Combining this inequality with \eqref{eq:predictor-weighted-estimate} and Poincar\'e's inequality proves \eqref{eq:weighted-Y-H1}.

For \(t\ge D\), the predictor identity gives
\begin{equation}
\label{eq:u-predictor-weighted}
u(t)
=
Y(t-D)
+
\int_{t-D}^{t}
e^{A_n(t-s)}\mathfrak d(s)\,ds.
\end{equation}
Analyticity of the semigroup generated by \(A_n\) yields constants \(C_n>0\) and \(\omega_n\in\mathbb R\) such that
\begin{equation}
\label{eq:Sn-smoothing}
\|e^{A_nr}\phi\|_{H_0^1(I)}
\le
C_nr^{-1/2}e^{\omega_nr}\|\phi\|_X,
\qquad r>0.
\end{equation}
The weighted convolution kernel
\[
C_nr^{-1/2}e^{(\omega_n+\theta)r}
\mathbf 1_{(0,D)}(r)
\]
belongs to \(L^1(0,D)\). Hence Young's convolution inequality, together with
\[
\|Y(\cdot-D)\|_{L^2_\theta(D,T;H_0^1(I))}
=
e^{\theta D}
\|Y\|_{L^2_\theta(0,T-D;H_0^1(I))},
\]
gives
\[
\begin{aligned}
\|u_x\|_{L^2_\theta(D,T;L^2(I))}
&\le
e^{\theta D}C_{Y,1}(\theta)\|Y_0\|_X\\
&\quad+
\left(
e^{\theta D}G_{Y\mathfrak d}^{(1)}(\theta)
+
G_D(\theta)
\right)
\|\mathfrak d\|_{L^2_\theta(0,T;X)}.
\end{aligned}
\]
This is \eqref{eq:weighted-ux-gain}.
\end{proof}

For the bathymetric subsystem, define
\begin{equation}
\label{eq:bathymetric-form}
\begin{aligned}
\mathfrak a_v(t;\phi)
&=
\epsilon_2\|\phi_x\|_{L^2(\Omega)}^2
+
\bar\epsilon_2\|\phi_y\|_{L^2(\Omega)}^2\\
&\quad
-
\int_\Omega
\alpha_2(x,y,t)|\phi(x,y)|^2\,dx\,dy.
\end{aligned}
\end{equation}
This form isolates the intrinsic bathymetric contribution to the energy balance. Its coercivity and the resulting weighted estimate are established next.

\begin{proposition}[Bathymetric coercivity and weighted gain]
\label{prop:bathymetric-weighted-gain}
Assume that there exists \(\mu_v>0\) such that, for almost every \(t\ge0\),
\begin{equation}
\label{eq:bathymetric-coercivity}
\mathfrak a_v(t;\phi)
\ge
\mu_v\|\phi\|_{H^1(\Omega)}^2,
\qquad
\phi\in H^1(\Omega).
\end{equation}
Then every energy solution satisfies, for almost every \(t>0\),
\begin{equation}
\label{eq:bathymetric-energy-gain}
\frac12\frac{d}{dt}\|v(t)\|_{L^2(\Omega)}^2
+
\frac{\mu_v}{2}\|v(t)\|_{H^1(\Omega)}^2
\le
b_v\|u_x(t)\|_{L^2(I)}^2,
\end{equation}
where
\begin{equation}
\label{eq:bv-definition}
b_v
=
\frac{dL_2^2}{2\mu_v},
\qquad
L_2
=
\|\lambda_2\|_{L^\infty(I\times\mathbb R_+)}.
\end{equation}
Moreover, for every
\[
0<\theta<\frac{\mu_v}{2}
\]
and every \(T>0\),
\begin{align}
\|v\|_{L^2_\theta(0,T;H^1(\Omega))}
&\le
C_{v0}(\theta)\|v_0\|_{L^2(\Omega)}
\nonumber\\
&\quad+
G_{vu}(\theta)
\|u_x\|_{L^2_\theta(0,T;L^2(I))},
\label{eq:weighted-bathymetric-gain}
\end{align}
where
\begin{equation}
\label{eq:bathymetric-weighted-constants}
C_{v0}(\theta)
=
\frac{1}{\sqrt{\mu_v-2\theta}},
\qquad
G_{vu}(\theta)
=
\sqrt{\frac{2b_v}{\mu_v-2\theta}}.
\end{equation}
\end{proposition}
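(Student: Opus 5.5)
Two steps: a pointwise energy inequality for the bathymetric equation, then a weighted integration.

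\textbf{Energy inequality.} Test the bathymetric equation \eqref{eq:v_pde} against \(v(t)\) in the duality \(\langle\cdot,\cdot\rangle_{(H^1)',H^1}\). Since \((u,v)\in L^2(0,T;\mathcal V)\cap H^1(0,T;\mathcal V')\) by Theorem~\ref{thm:global-wellposedness}, the Lions--Magenes lemma gives \(\frac{d}{dt}\frac12\|v\|^2_{L^2(\Omega)}=\langle v_t,v\rangle\) for a.e. \(t\). The Neumann conditions \eqref{eq:bc_v} make the integrations by parts boundary-free, so the diffusion and reaction terms combine into exactly \(-\mathfrak a_v(t;v)\). This yields
\[
\frac12\frac{d}{dt}\|v\|_{L^2(\Omega)}^2+\mathfrak a_v(t;v)=-\langle C_{21}(t)u,v\rangle_{L^2(\Omega)}.
\]
Apply \eqref{eq:bathymetric-coercivity} on the left. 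On the right, the pointwise bound in the proof of Lemma~\ref{lem:bounded_coupling} gives \(\|C_{21}u\|_{L^2(\Omega)}\le\sqrt d L_2\|u_x\|_{L^2(I)}\). Then
\[
\sqrt d L_2\|u_x\|\,\|v\|_{L^2(\Omega)}\le \frac{\mu_v}{2}\|v\|_{H^1}^2+\frac{dL_2^2}{2\mu_v}\|u_x\|^2 .
\]
Absorbing half of the coercive term gives \eqref{eq:bathymetric-energy-gain} with \(b_v\) as in \eqref{eq:bv-definition}.

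\textbf{Weighted estimate.} Multiply \eqref{eq:bathymetric-energy-gain} by \(2e^{2\theta t}\) and use
\[
\frac{d}{dt}\bigl(e^{2\theta t}\|v\|^2\bigr)=e^{2\theta t}\frac{d}{dt}\|v\|^2+2\theta e^{2\theta t}\|v\|^2 .
\]
Bound \(2\theta\|v\|_{L^2}^2\le2\theta\|v\|_{H^1}^2\) to obtain
\[
\frac{d}{dt}\bigl(e^{2\theta t}\|v\|^2_{L^2}\bigr)+(\mu_v-2\theta)e^{2\theta t}\|v\|_{H^1}^2\le 2b_ve^{2\theta t}\|u_x\|^2_{L^2(I)} .
\]
The function \(t\mapsto e^{2\theta t}\|v(t)\|^2\) is absolutely continuous, so integrate over \((0,T)\) and drop the nonnegative final-time term. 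This gives
\[
(\mu_v-2\theta)\|v\|^2_{L^2_\theta(0,T;H^1)}\le\|v_0\|^2+2b_v\|u_x\|^2_{L^2_\theta(0,T;L^2)} .
\]
Divide by \(\mu_v-2\theta>0\) and use \(\sqrt{a+b}\le\sqrt a+\sqrt b\). This produces \eqref{eq:weighted-bathymetric-gain} with exactly the constants \eqref{eq:bathymetric-weighted-constants}.

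\textbf{Main obstacle.} The only delicate point is justifying the energy identity for energy solutions, which are not classical. The identity needs \(v\in L^2(0,T;H^1)\cap H^1(0,T;(H^1)')\) and \(C_{21}u\in L^2(0,T;L^2(\Omega))\). The first is supplied by Theorem~\ref{thm:global-wellposedness}, and the second follows from that theorem together with Lemma~\ref{lem:bounded_coupling}. Everything else is routine Young/Gronwall-type bookkeeping.
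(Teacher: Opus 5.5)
Your proposal is correct and follows the paper's proof essentially step for step. Like the paper, you test with $v$, bound the coupling term by $\sqrt d\,L_2\|u_x\|_{L^2(I)}\|v\|_{L^2(\Omega)}$, apply Young's inequality with parameter $\mu_v$ and absorb into the coercive term, then multiply by $e^{2\theta t}$, integrate, drop the terminal term, and use $\sqrt{a+b}\le\sqrt a+\sqrt b$. Your explicit appeal to the Lions--Magenes lemma to justify the energy identity and absolute continuity for energy solutions adds a detail the paper leaves implicit.
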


\begin{proof}
Testing the bathymetric equation with \(v(t)\) gives
\[
\begin{aligned}
\frac12\frac{d}{dt}\|v(t)\|_{L^2(\Omega)}^2
&+
\mathfrak a_v(t;v(t))\\
&=
-\int_\Omega\lambda_2(x,t)u_x(x,t)v(x,y,t)\,dx\,dy.
\end{aligned}
\]
Since \(u_x\) is independent of \(y\),
\[
\left|
\int_\Omega\lambda_2u_xv
\right|
\le
\sqrt d\,L_2
\|u_x\|_{L^2(I)}
\|v\|_{L^2(\Omega)}
\]
and therefore, by Young's inequality and \eqref{eq:bathymetric-coercivity},
\[
\left|
\int_\Omega\lambda_2u_xv
\right|
\le
\frac{\mu_v}{2}\|v\|_{H^1(\Omega)}^2
+
b_v\|u_x\|_{L^2(I)}^2.
\]
This proves \eqref{eq:bathymetric-energy-gain}.

Multiplying \eqref{eq:bathymetric-energy-gain} by \(e^{2\theta t}\) and using \(\|v\|_{L^2(\Omega)}\le\|v\|_{H^1(\Omega)}\) gives
\[
\begin{aligned}
\frac12\frac{d}{dt}
\left(e^{2\theta t}\|v(t)\|_{L^2(\Omega)}^2\right)
&
+
\left(\frac{\mu_v}{2}-\theta\right)
e^{2\theta t}\|v(t)\|_{H^1(\Omega)}^2\\
&\quad
\le
b_v e^{2\theta t}\|u_x(t)\|_{L^2(I)}^2.
\end{aligned}
\]
Integrating over \((0,T)\) and discarding the nonnegative terminal term gives
\[
(\mu_v-2\theta)
\|v\|_{L^2_\theta(0,T;H^1(\Omega))}^2
\le
\|v_0\|_{L^2(\Omega)}^2
+
2b_v
\|u_x\|_{L^2_\theta(0,T;L^2(I))}^2.
\]
Taking square roots and using \(\sqrt{a+b}\le\sqrt a+\sqrt b\) proves \eqref{eq:weighted-bathymetric-gain}.
\end{proof}

The preceding estimates yield a weighted bound for the residual term \(\mathfrak d\) in terms of the shoreline gradient.

\begin{proposition}[Weighted residual estimate]
\label{prop:weighted-residual-gain}
Let
\[
0<\theta<\frac{\mu_v}{2}.
\]
Then, for every \(T>0\),
\begin{align}
\|\mathfrak d\|_{L^2_\theta(0,T;X)}
&\le
C_{\mathfrak d0}(\theta)\|v_0\|_{L^2(\Omega)}
\nonumber\\
&\quad+
G_{\mathfrak du}(\theta)
\|u_x\|_{L^2_\theta(0,T;L^2(I))},
\label{eq:weighted-residual-gain}
\end{align}
where
\begin{equation}
\label{eq:Cd0}
C_{\mathfrak d0}(\theta)
=
M_{12}C_{v0}(\theta),
\end{equation}
and
\begin{equation}
\label{eq:Gdu}
G_{\mathfrak du}(\theta)
=
m_\alpha\frac{\ell}{\pi}
+
M_{12}
\sqrt{\frac{2b_v}{\mu_v-2\theta}}.
\end{equation}
\end{proposition}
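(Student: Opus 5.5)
The plan is to start from the pointwise bound on the residual recorded just before Proposition~\ref{prop:weighted-shoreline-gain}, namely
\[
\|\mathfrak d(t)\|_X\le m_\alpha\|u(t)\|_X+M_{12}\|v(t)\|_{H^1(\Omega)},
\]
which comes from Lemma~\ref{lem:bounded_coupling}. Both sides are multiplied by \(e^{\theta t}\). Then the Minkowski inequality in \(L^2(0,T)\) gives
\[
\|\mathfrak d\|_{L^2_\theta(0,T;X)}\le m_\alpha\|u\|_{L^2_\theta(0,T;X)}+M_{12}\|v\|_{L^2_\theta(0,T;H^1(\Omega))}.
\]
Theorem~\ref{thm:global-wellposedness} guarantees that all the norms involved are finite on \([0,T]\), so every manipulation takes place among finite quantities.

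The shoreline term is handled with the Poincar\'e inequality on \(H_0^1(I)\), \(I=(0,\ell)\). The sharp constant is \(\ell/\pi\), coming from the first Dirichlet eigenvalue \((\pi/\ell)^2\) of \(-\partial_{xx}\). This gives \(\|u(t)\|_X\le(\ell/\pi)\|u_x(t)\|_{L^2(I)}\) for a.e.\ \(t\), since \(u(t)\in H_0^1(I)\) for a.e.\ \(t\) by the energy regularity. Squaring, multiplying by \(e^{2\theta t}\) and integrating yields
\[
m_\alpha\|u\|_{L^2_\theta(0,T;X)}\le m_\alpha\frac{\ell}{\pi}\|u_x\|_{L^2_\theta(0,T;L^2(I))}.
\]

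For the bathymetric term, the restriction \(0<\theta<\mu_v/2\) allows a direct application of Proposition~\ref{prop:bathymetric-weighted-gain}. Its estimate \eqref{eq:weighted-bathymetric-gain} bounds \(\|v\|_{L^2_\theta(0,T;H^1(\Omega))}\) by \(C_{v0}(\theta)\|v_0\|_{L^2(\Omega)}+G_{vu}(\theta)\|u_x\|_{L^2_\theta(0,T;L^2(I))}\). Multiplying this by \(M_{12}\) and adding the shoreline bound gives exactly \eqref{eq:weighted-residual-gain}, with \(C_{\mathfrak d0}=M_{12}C_{v0}(\theta)\) and \(G_{\mathfrak du}=m_\alpha\ell/\pi+M_{12}\sqrt{2b_v/(\mu_v-2\theta)}\).

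The only delicate point is that Proposition~\ref{prop:bathymetric-weighted-gain} needs the coercivity hypothesis \eqref{eq:bathymetric-coercivity}. I would treat it as part of the standing assumptions here, since it is implicit in the definition of \(\mu_v\) and \(b_v\) in the statement. A second point is that the energy inequality \eqref{eq:bathymetric-energy-gain} must hold for energy solutions and not only for smooth ones. This follows from the Lions--Magenes regularity \(v\in L^2(0,T;H^1)\cap H^1(0,T;(H^1)')\), which makes \(t\mapsto\|v(t)\|^2\) absolutely continuous. Beyond these two points the argument is a routine combination of existing bounds.
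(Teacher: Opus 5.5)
Your proposal is correct and follows the paper's proof essentially step for step: the pointwise bound on $\|\mathfrak d(t)\|_X$ from Lemma~\ref{lem:bounded_coupling}, Poincar\'e's inequality with constant $\ell/\pi$, the triangle inequality in the weighted norm, and Proposition~\ref{prop:bathymetric-weighted-gain}, which together give $C_{\mathfrak d0}(\theta)=M_{12}C_{v0}(\theta)$ and the stated $G_{\mathfrak du}(\theta)$. The differences are immaterial: you apply Poincar\'e after taking the weighted norm rather than pointwise, and you correctly note that the coercivity hypothesis \eqref{eq:bathymetric-coercivity} is implicitly in force.
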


\begin{proof}
From the definition of \(\mathfrak d\) and Lemma~\ref{lem:bounded_coupling},
\[
\|\mathfrak d(t)\|_X
\le
m_\alpha\|u(t)\|_{L^2(I)}
+
M_{12}\|v(t)\|_{H^1(\Omega)}.
\]
Since \(u(t)\in H_0^1(I)\), for almost every \(t>0\), Poincar\'e's inequality gives
\[
\|u(t)\|_{L^2(I)}
\le
\frac{\ell}{\pi}
\|u_x(t)\|_{L^2(I)},
\]
and therefore
\[
\|\mathfrak d(t)\|_X
\le
m_\alpha\frac{\ell}{\pi}
\|u_x(t)\|_{L^2(I)}
+
M_{12}\|v(t)\|_{H^1(\Omega)}.
\]
Taking the weighted \(L^2\)-norm and using the triangle inequality yields
\begin{align*}
\|\mathfrak d\|_{L^2_\theta(0,T;X)}
&\le
m_\alpha\frac{\ell}{\pi}
\|u_x\|_{L^2_\theta(0,T;L^2(I))}
\\
&\quad+
M_{12}
\|v\|_{L^2_\theta(0,T;H^1(\Omega))}.
\end{align*}
Proposition~\ref{prop:bathymetric-weighted-gain} then gives
\begin{align*}
\|\mathfrak d\|_{L^2_\theta(0,T;X)}
&\le
M_{12}C_{v0}(\theta)
\|v_0\|_{L^2(\Omega)}
\\
&\quad+
m_\alpha\frac{\ell}{\pi}
\|u_x\|_{L^2_\theta(0,T;L^2(I))}
\\
&\quad+
M_{12}
\sqrt{\frac{2b_v}{\mu_v-2\theta}}\,
\|u_x\|_{L^2_\theta(0,T;L^2(I))}.
\end{align*}
The definitions \eqref{eq:Cd0}--\eqref{eq:Gdu} give \eqref{eq:weighted-residual-gain}.
\end{proof}

The preceding estimates define the two weighted gains \(\mathfrak d\mapsto u_x\) and \(u_x\mapsto\mathfrak d\). Their interconnection is analyzed through the following small-gain condition.

\begin{proposition}[Small-gain closure]
\label{prop:small-gain-closure}
Define
\[
\mathcal I_0
=
\|(u_0,v_0)\|_{\mathcal H}
+
\|U_0\|_{L^2(-D,0;X)}.
\]
Assume that, for some
\[
0<\theta<
\min\left\{\omega_F,\frac{\mu_v}{2}\right\},
\]
\begin{equation}
\label{eq:small-gain-condition}
\Gamma(\theta)
:=
G_{u\mathfrak d}(\theta)G_{\mathfrak du}(\theta)
<1.
\end{equation}
Then there exists \(C_\theta>0\), independent of \(T\), such that
\begin{align}
\|u_x\|_{L^2_\theta(0,T;L^2(I))}
&+
\|\mathfrak d\|_{L^2_\theta(0,T;X)}
+
\|v\|_{L^2_\theta(0,T;H^1(\Omega))}\nonumber\\
&+
\|Y\|_{L^2_\theta(0,T;H_0^1(I))}
\le
C_\theta\mathcal I_0.
\label{eq:small-gain-global-estimate}
\end{align}
\end{proposition}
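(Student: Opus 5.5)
The plan is to close the two weighted input--output estimates of Proposition~\ref{prop:weighted-shoreline-gain} and Proposition~\ref{prop:weighted-residual-gain} into a single inequality for
\[
\mathcal A_T:=\|u_x\|_{L^2_\theta(0,T;L^2(I))},
\]
absorb the loop term using \eqref{eq:small-gain-condition}, and then recover the remaining three norms from $\mathcal A_T$. Fix $\theta$ as in the statement and $T>0$. By Theorem~\ref{thm:global-wellposedness}, $u\in L^2(0,T;H_0^1(I))$, $v\in L^2(0,T;H^1(\Omega))$ and $\mathfrak d\in L^2(0,T;X)$. Since $e^{2\theta t}\le e^{2\theta T}$ on $(0,T)$, every weighted norm below is \emph{finite} for each fixed $T$. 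This finiteness is what makes the absorption legitimate, and it is the reason for working on finite horizons first.

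\textbf{Step 1: initial data.} First I bound $\|Y_0\|_X$ by $\mathcal I_0$. From \eqref{eq:predictor-initial}, the bound $\|e^{A_nr}\|\le e^{\omega_n r}$, $\|B\|\le\|g\|_{L^\infty}$ and the Cauchy--Schwarz inequality in $\theta$ give
\[
\|Y_0\|_X\le C_1\bigl(\|u_0\|_X+\|U_0\|_{L^2(-D,0;X)}\bigr)\le C_1\mathcal I_0 .
\]

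\textbf{Step 2: splitting off the delay interval.} Estimate \eqref{eq:weighted-ux-gain} only controls $u_x$ on $(D,T)$, so I split
\[
\mathcal A_T\le \|u_x\|_{L^2_\theta(0,\min\{D,T\};L^2(I))}+\|u_x\|_{L^2_\theta(D,T;L^2(I))},
\]
where the second term is present only if $T>D$. On $(0,D)$ the weight is at most $e^{\theta D}$. The finite-time estimate \eqref{eq:closed-loop-finite-time-estimate} with horizon $D$ then gives a bound $e^{\theta D}C_D^{1/2}\mathcal I_0$, which is independent of $T$. For the second term I use \eqref{eq:weighted-ux-gain} together with Step~1. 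The result is
\[
\mathcal A_T\le C_2\mathcal I_0+G_{u\mathfrak d}(\theta)\|\mathfrak d\|_{L^2_\theta(0,T;X)} .
\]

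\textbf{Step 3: closing the loop.} Next I insert \eqref{eq:weighted-residual-gain}, using $\|v_0\|\le\mathcal I_0$. This gives
\[
\mathcal A_T\le \bigl(C_2+G_{u\mathfrak d}C_{\mathfrak d0}\bigr)\mathcal I_0+\Gamma(\theta)\mathcal A_T .
\]
Because $\mathcal A_T<\infty$ and $\Gamma(\theta)<1$, this yields
\[
\mathcal A_T\le \frac{C_2+G_{u\mathfrak d}C_{\mathfrak d0}}{1-\Gamma(\theta)}\,\mathcal I_0,
\]
with a constant that does not depend on $T$. The other three norms then follow in turn.
\begin{itemize}
\item $\|\mathfrak d\|_{L^2_\theta}$ is bounded by feeding this bound back into \eqref{eq:weighted-residual-gain}.
\item $\|v\|_{L^2_\theta(0,T;H^1)}$ is bounded via \eqref{eq:weighted-bathymetric-gain}.
\item $\|Y\|_{L^2_\theta(0,T;H_0^1)}$ is bounded via \eqref{eq:weighted-Y-H1} together with Step~1.
\end{itemize}
Summing the four bounds gives \eqref{eq:small-gain-global-estimate}.

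The main obstacle I expect is the justification that the estimates of Propositions~\ref{prop:weighted-shoreline-gain}--\ref{prop:weighted-residual-gain} apply simultaneously to the closed-loop energy solution. The predictor identity \eqref{eq:u-predictor-weighted} holds only in the mild sense, and the energy inequality for $v$ requires $u_x\in L^2$. I would handle this by approximating with regular data (via Proposition~\ref{prop:predictor-dynamics}) and passing to the limit using \eqref{eq:closed-loop-finite-time-estimate}. The absorption step must also be performed strictly at finite $T$, where finiteness of $\mathcal A_T$ is guaranteed by Theorem~\ref{thm:global-wellposedness}. The bound extends to all $T$ because the constant is uniform in $T$.
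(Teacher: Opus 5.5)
Your proposal is correct and follows the paper's proof essentially step for step: you bound $\|Y_0\|_X$ via \eqref{eq:predictor-initial}, control $u_x$ on the initial window $(0,D)$ through the finite-time estimate \eqref{eq:closed-loop-finite-time-estimate}, combine \eqref{eq:weighted-ux-gain} with \eqref{eq:weighted-residual-gain}, absorb using $\Gamma(\theta)<1$, and back-substitute into \eqref{eq:weighted-residual-gain}, \eqref{eq:weighted-bathymetric-gain} and \eqref{eq:weighted-Y-H1}. Your explicit remark that the absorption is legitimate only because $\|u_x\|_{L^2_\theta(0,T;L^2(I))}$ is finite at each fixed $T$ (by Theorem~\ref{thm:global-wellposedness}) is used only implicitly in the paper, so it is a useful clarification rather than a different route.
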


\begin{proof}
Since \(\mathfrak d\in L^2(0,T;X)\) on every finite interval, all preceding weighted estimates apply. For \(T\le D\), the conclusion follows by restricting to \([0,T]\) the corresponding estimates on the fixed interval \([0,D]\); hence assume \(T>D\).

Theorem~\ref{thm:global-wellposedness}, applied on the fixed interval \([0,D]\), and the bound \(e^{2\theta t}\le e^{2\theta D}\) yield
\begin{equation}
\label{eq:initial-delay-interval}
\|u_x\|_{L^2_\theta(0,D;L^2(I))}
\le
C_{D,\theta}\mathcal I_0.
\end{equation}
Moreover, from \eqref{eq:predictor-initial},
\begin{equation}
\label{eq:Y0-initial-bound}
\|Y_0\|_X
\le
C_{Y,D}\mathcal I_0.
\end{equation}
Hence Proposition~\ref{prop:weighted-shoreline-gain} yields
\begin{equation}
\label{eq:ux-complete-gain}
\|u_x\|_{L^2_\theta(0,T;L^2(I))}
\le
C_{u0}(\theta)\mathcal I_0
+
G_{u\mathfrak d}(\theta)
\|\mathfrak d\|_{L^2_\theta(0,T;X)},
\end{equation}
where \(C_{u0}(\theta)>0\) is independent of \(T\).

On the other hand, Proposition~\ref{prop:weighted-residual-gain} gives
\begin{equation}
\label{eq:d-complete-gain}
\|\mathfrak d\|_{L^2_\theta(0,T;X)}
\le
C_{\mathfrak d0}(\theta)\mathcal I_0
+
G_{\mathfrak du}(\theta)
\|u_x\|_{L^2_\theta(0,T;L^2(I))}.
\end{equation}
Combining \eqref{eq:ux-complete-gain} and \eqref{eq:d-complete-gain} yields
\[
\begin{aligned}
\bigl(1-\Gamma(\theta)\bigr)
\|u_x\|_{L^2_\theta(0,T;L^2(I))}
&\le
\left[
C_{u0}(\theta)
+
G_{u\mathfrak d}(\theta)
C_{\mathfrak d0}(\theta)
\right]\mathcal I_0.
\end{aligned}
\]
Since \(\Gamma(\theta)<1\), the shoreline-gradient estimate is uniform in \(T\). Substitution into \eqref{eq:d-complete-gain}, \eqref{eq:weighted-bathymetric-gain}, and \eqref{eq:weighted-Y-H1} proves \eqref{eq:small-gain-global-estimate}.
\end{proof}

The preceding small-gain estimate provides the uniform weighted bounds from which pointwise exponential decay of the full delayed closed-loop state is derived.

\begin{theorem}[Exponential stability of the closed-loop system]
\label{thm:full-exponential-stability}
Let \(K_{\rm cr}\) be given by Proposition~\ref{prop:critical-controllability}, and let \(A_F\) be the exponentially stable operator of Theorem~\ref{thm:finite-infinite-lifting}. Assume \eqref{eq:bathymetric-coercivity} and suppose that, for some
\[
0<\theta<
\min\left\{\omega_F,\frac{\mu_v}{2}\right\},
\]
the small-gain condition \eqref{eq:small-gain-condition} holds.
Then the predictor feedback
\[
U(t)=K_{\rm cr}P_{\rm cr}Y(t)
\]
exponentially stabilizes the delayed coupled system. More precisely, there exists \(M\ge1\) such that
\begin{equation}
\label{eq:final-exponential-estimate}
\|(u(t),v(t),z(t))\|_{\mathcal X}
\le
Me^{-\theta t}\mathcal I_0,
\qquad t\ge0,
\end{equation}
where
\[
\mathcal X
=
\mathcal H\times\mathcal Z,
\qquad
\mathcal Z=L^2((0,D);X).
\]
\end{theorem}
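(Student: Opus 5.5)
The plan is to upgrade the uniform weighted $L^2$ bounds of Proposition~\ref{prop:small-gain-closure} to pointwise exponential decay, treating each component $u$, $v$, $z$ separately and then summing. Since the constant $C_\theta$ in \eqref{eq:small-gain-global-estimate} does not depend on $T$, we may let $T\to\infty$ and obtain
\[
\int_0^\infty e^{2\theta t}\bigl(\|u_x\|^2+\|\mathfrak d\|_X^2+\|v\|_{H^1(\Omega)}^2+\|Y\|_{H_0^1(I)}^2\bigr)dt\le C_\theta^2\mathcal I_0^2 .
\]
On the initial interval $[0,D]$, Theorem~\ref{thm:global-wellposedness} bounds everything by $C_D\mathcal I_0$, and $e^{\theta t}\le e^{\theta D}$ there. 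It therefore suffices to bound the pointwise norms for $t\ge D$ by $e^{-\theta t}\mathcal I_0$.

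\emph{Bathymetry.} Multiply \eqref{eq:bathymetric-energy-gain} by $e^{2\theta t}$ and integrate over $(0,t)$, dropping the dissipation term after absorbing $\theta e^{2\theta s}\|v\|^2\le\theta e^{2\theta s}\|v\|_{H^1}^2$. This gives
\[
e^{2\theta t}\|v(t)\|^2\le\|v_0\|^2+2\theta\|v\|_{L^2_\theta H^1}^2+2b_v\|u_x\|_{L^2_\theta L^2}^2\le C\mathcal I_0^2 .
\]

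\emph{Predictor.} From the mild form of \eqref{eq:predictor-closed-loop}, $e^{\theta t}\|Y(t)\|_X\le M_Fe^{-(\omega_F-\theta)t}\|Y_0\|+M_F\|e^{A_nD}\|\int_0^t e^{-(\omega_F-\theta)(t-s)}e^{\theta s}\|\mathfrak d(s)\|ds$. Cauchy--Schwarz in $s$ bounds the integral by $(2(\omega_F-\theta))^{-1/2}\|\mathfrak d\|_{L^2_\theta}$. Using \eqref{eq:Y0-initial-bound}, this yields $\|Y(t)\|_X\le Ce^{-\theta t}\mathcal I_0$.

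\emph{Shoreline.} For $t\ge D$ use \eqref{eq:u-predictor-weighted}:
\[
\|u(t)\|_X\le\|Y(t-D)\|_X+e^{|\omega_n|D}\int_{t-D}^t\|\mathfrak d(s)\|ds .
\]
Cauchy--Schwarz on the window of length $D$ gives
\[
\int_{t-D}^t\|\mathfrak d\|\le\sqrt D\,e^{-\theta(t-D)}\|\mathfrak d\|_{L^2_\theta},
\]
so $\|u(t)\|_X\le Ce^{-\theta t}\mathcal I_0$.

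\emph{Transport state.} Here $\|z(t)\|_{\mathcal Z}^2=\int_{t-D}^t\|U(r)\|^2dr$, and for $r\ge0$ we have $U(r)=FY(r)$. For $t\ge D$, the predictor decay just obtained gives $\|z(t)\|_{\mathcal Z}^2\le\|F\|^2C^2\mathcal I_0^2\int_{t-D}^te^{-2\theta r}dr\le C'e^{-2\theta t}\mathcal I_0^2$. For $t<D$ the finite-time bound \eqref{eq:closed-loop-finite-time-estimate} applies.

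Summing the three component estimates and enlarging the constant gives \eqref{eq:final-exponential-estimate} with some $M\ge1$.

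The main obstacle is the shoreline step: justifying the identity \eqref{eq:u-predictor-weighted} pointwise in $\mathcal H$ for energy solutions, and controlling $e^{A_n(t-s)}$ uniformly over a window of length $D$ even though $\omega_n$ may be positive. I would handle this by invoking the mild form from Proposition~\ref{prop:predictor-dynamics} together with density of regular data. The constant $e^{\max(\omega_n,0)D}$ is harmless because $D$ is fixed.
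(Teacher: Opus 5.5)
Your proposal is correct and follows the paper's proof essentially step by step. You let $T\to\infty$ in Proposition~\ref{prop:small-gain-closure}, obtain pointwise decay of $Y$ by variation of constants and Cauchy--Schwarz, and transfer it to $u$ via \eqref{eq:u-predictor-weighted} using a uniform bound on $e^{A_nr}$ for $r\in[0,D]$ (the paper's $M_{n,D}$); you then transfer it to $z$ via $U=FY$ and cover $[0,D]$ by Theorem~\ref{thm:global-wellposedness}. The only deviation is in the bathymetric step, and it is harmless: the paper applies Gronwall to $\frac{d}{dt}\|v\|^2+\mu_v\|v\|^2\le 2b_v\|u_x\|^2$ and uses only the weighted $u_x$ bound, while you integrate the $e^{2\theta t}$-weighted energy inequality and also use the weighted $H^1$ bound on $v$; both work.
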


\begin{proof}
Letting \(T\to\infty\) in Proposition~\ref{prop:small-gain-closure} yields
\begin{equation}
\label{eq:global-weighted-estimate}
\begin{aligned}
\|u_x\|_{L^2_\theta(0,\infty;L^2(I))}
&+
\|\mathfrak d\|_{L^2_\theta(0,\infty;X)}
+
\|v\|_{L^2_\theta(0,\infty;H^1(\Omega))}\\
&+
\|Y\|_{L^2_\theta(0,\infty;H_0^1(I))}
\le
C_\theta\mathcal I_0.
\end{aligned}
\end{equation}

For the predictor, variation of constants gives
\[
Y(t)
=
e^{A_Ft}Y_0
+
\int_0^t
e^{A_F(t-s)}e^{A_nD}\mathfrak d(s)\,ds.
\]
Since \(\theta<\omega_F\), Cauchy--Schwarz gives
\[
\begin{aligned}
\left\|
\int_0^t
e^{A_F(t-s)}e^{A_nD}\mathfrak d(s)\,ds
\right\|_X
&\le
\frac{M_F\|e^{A_nD}\|_{\mathcal L(X)}}
{\sqrt{2(\omega_F-\theta)}}
e^{-\theta t}\\
&\quad\times
\|\mathfrak d\|_{L^2_\theta(0,\infty;X)}.
\end{aligned}
\]
Hence, using \eqref{eq:Y0-initial-bound} and \eqref{eq:global-weighted-estimate},
\begin{equation}
\label{eq:Y-pointwise-decay}
\|Y(t)\|_X
\le
C_Ye^{-\theta t}\mathcal I_0.
\end{equation}

For \(t\ge D\), \eqref{eq:u-predictor-weighted} and \eqref{eq:Y-pointwise-decay} yield
\[
\|u(t)\|_X
\le
C_Ye^{\theta D}e^{-\theta t}\mathcal I_0
+
M_{n,D}
\int_{t-D}^{t}\|\mathfrak d(s)\|_X\,ds,
\]
where
\[
M_{n,D}
=
\sup_{0\le r\le D}
\|e^{A_nr}\|_{\mathcal L(X)}.
\]
Moreover,
\[
\int_{t-D}^{t}\|\mathfrak d(s)\|_X\,ds
\le
e^{-\theta(t-D)}
\sqrt D\,
\|\mathfrak d\|_{L^2_\theta(0,\infty;X)}.
\]
Thus,
\begin{equation}
\label{eq:u-pointwise-decay}
\|u(t)\|_X
\le
C_ue^{-\theta t}\mathcal I_0,
\qquad t\ge D.
\end{equation}

For the bathymetric component, \eqref{eq:bathymetric-energy-gain} and \(\|v\|_{L^2(\Omega)}\le\|v\|_{H^1(\Omega)}\) imply
\[
\frac{d}{dt}\|v(t)\|_{L^2(\Omega)}^2
+
\mu_v\|v(t)\|_{L^2(\Omega)}^2
\le
2b_v\|u_x(t)\|_{L^2(I)}^2.
\]
Hence
\[
\begin{aligned}
\|v(t)\|_{L^2(\Omega)}^2
&\le
e^{-\mu_vt}\|v_0\|_{L^2(\Omega)}^2\\
&\quad+
2b_v
\int_0^t
e^{-\mu_v(t-s)}
\|u_x(s)\|_{L^2(I)}^2\,ds.
\end{aligned}
\]
Since \(2\theta<\mu_v\),
\[
\begin{aligned}
\int_0^t
e^{-\mu_v(t-s)}
\|u_x(s)\|_{L^2(I)}^2\,ds
&\le
e^{-2\theta t}
\|u_x\|_{L^2_\theta(0,\infty;L^2(I))}^2.
\end{aligned}
\]
Together with \eqref{eq:global-weighted-estimate}, this gives
\begin{equation}
\label{eq:v-pointwise-decay}
\|v(t)\|_{L^2(\Omega)}
\le
C_ve^{-\theta t}\mathcal I_0.
\end{equation}

Finally,
\[
z(s,t)=U(t-D+s),
\qquad
U(t)=K_{\rm cr}P_{\rm cr}Y(t).
\]
For \(t\ge D\), \eqref{eq:Y-pointwise-decay} gives
\[
\|z(t)\|_{\mathcal Z}
\le
C_ze^{-\theta t}\mathcal I_0.
\]
On the fixed interval \(0\le t<D\), the finite-time estimate of Theorem~\ref{thm:global-wellposedness} controls \(u\), \(v\), and \(z\), and the factor \(e^{\theta D}\) is absorbed into the constant.
Combining \eqref{eq:u-pointwise-decay}, \eqref{eq:v-pointwise-decay}, the transport-state estimate, and the finite-time bound on \([0,D]\) proves \eqref{eq:final-exponential-estimate}.
\end{proof}

\section{Numerical Illustration}
\label{sec:numerics}

The simulations examine, on normalized scales, the effect of delayed shoreline actuation and its predictor compensation on the coupled shoreline--bathymetry dynamics. Particular attention is given to the critical shoreline modes and to the unactuated bathymetric response.

\subsection{Numerical Setting and Critical-Mode Design}
\label{subsec:numerical-setting}

The computational domains are defined by
\[
\ell=200,
\qquad
d=100.
\]
The shoreline domain is discretized by centered finite differences on \(N_x=180\) interior points with homogeneous Dirichlet conditions. The bathymetric subsystem is discretized on a tensor grid with \(N_{x,2}=90\) and \(N_y=50\), subject to homogeneous Neumann conditions.
Diffusion is treated implicitly, whereas reaction and coupling terms are evaluated explicitly. The simulation parameters are
\[
T=2000,
\qquad
\Delta t=0.05,
\]
and the displayed coordinates are normalized as \(x/\ell\), \(y/d\), and \(t/T\).

The principal coefficients are
\[
\epsilon_1=3\times10^{-3},
\qquad
\epsilon_2=\bar\epsilon_2=4\times10^{-2},
\qquad
\bar\alpha_1=1.5\times10^{-5}.
\]
The spatially and temporally varying coefficients used in the simulations are
\[
g(x)
=
0.02
+
0.10\,\mathbf 1_{[0.35\ell,\,0.65\ell]}(x),
\]
\[
\widetilde\alpha_1(x,t)
=
3\times10^{-6}
\cos\left(\frac{2\pi x}{\ell}\right)
\sin\left(\frac{2\pi t}{300}\right),
\]
\[
\lambda_1(x,y)
=
3\times10^{-2}
\left(
1+0.15\cos\frac{2\pi x}{\ell}
\right)
\exp\left(-\frac{y}{1.2d}\right),
\]
\[
\lambda_2(x)
=
2.5\times10^{-2}
\left(
1+0.10\sin\frac{2\pi x}{\ell}
\right),
\]
and
\[
\alpha_2(x,y)
=
-8\times10^{-4}
\left(
1+0.10\cos\frac{2\pi x}{\ell}
\right)
\left(
1+\frac{1}{20}\sin\frac{\pi y}{d}
\right).
\]

For the resulting semi-discrete bathymetric operator,
\[
\mu_{v,h}\ge 7.2005\times10^{-4}.
\]
Consequently, the discrete counterpart of the restriction on the exponential weight admits the range
\[
0<\theta<3.60025\times10^{-4}.
\]
This provides a consistency check with the bathymetric coercivity requirement, without constituting a numerical verification of the continuous small-gain condition.
The numerical energies defined below are instead unweighted discrete \(L^2\) diagnostics.

The initial conditions are
\[
u_0(x)
=
0.50\sin\frac{\pi x}{\ell}
+
0.20\sin\frac{2\pi x}{\ell}
+
0.10\sin\frac{4\pi x}{\ell},
\]
and
\[
v_0(x,y)
=
0.20\cos\frac{\pi x}{\ell}\cos\frac{\pi y}{d}
+
0.08\cos\frac{2\pi x}{\ell}\cos\frac{2\pi y}{d},
\]
with zero initial input history on \([-D,0]\).

The nominal semi-discrete shoreline operator has four critical eigenvalues,
\[
10^{-4}
\begin{bmatrix}
0.1426 & 0.1204 & 0.0834 & 0.0316
\end{bmatrix},
\]
and the corresponding closed-loop poles are assigned to
\[
-0.004,\qquad -0.005,\qquad -0.006,\qquad -0.007.
\]

For comparison, define
\begin{equation}
E_h(t)
=
E_{u,h}(t)+E_{v,h}(t)+E_{D,h}(t),
\label{eq:numerical-total-energy}
\end{equation}
where
\[
E_{u,h}(t)=\|u_h(t)\|_h^2,
\qquad
E_{v,h}(t)=\|v_h(t)\|_h^2,
\]
and
\[
E_{D,h}(t)
\approx
\int_0^D\|z_h(s,t)\|_h^2\,ds.
\]
Here, the discrete norms include the corresponding spatial quadrature weights. In open loop, \(E_{D,h}\equiv0\). The critical shoreline diagnostic is
\[
E_{{\rm cr},h}(t)
=
\|P_{{\rm cr},h}u_h(t)\|_h^2.
\]
These quantities are performance indicators and are not identified with the weighted functional used in the stability proof.

\subsection{Delay Robustness and Representative Delay}
\label{subsec:delay-robustness}

Using the numerical setting of Section~\ref{subsec:numerical-setting}, a coarse delay sweep over \(D\in[10,210]\), followed by refinement near the loss-of-decay region, compares uncompensated delayed feedback with predictor compensation.
For each delay, finite-horizon terminal-envelope slopes, resurgence indicators, and final-energy ratios are evaluated.

For normalization,
\[
D_{\rm crit}^{\rm naive}=224.234
\]
denotes the delay margin computed for the isolated semi-discrete critical subsystem under uncompensated delayed feedback. It is used only as a reference scale and is not a stability margin for the full shoreline--bathymetry system.

Figure~\ref{fig:delay-robustness} shows that increasing the uncompensated delay progressively weakens the decay of the terminal total-energy envelope, whose estimated slope loses negativity at approximately
\[
D_{\rm num}\simeq143.75.
\]
This transition is consistent with the progressive loss of effectiveness of a feedback signal computed from a shoreline state that is increasingly out of phase with the state reached at the actuation time. Accordingly, \(D_{\rm num}\) is interpreted only as an empirical finite-horizon decay-loss threshold and not as a stability margin of the continuous closed-loop system.

The representative value
\[
D^\star=120,
\qquad
\frac{D^\star}{D_{\rm num}}\simeq0.835,
\qquad
\frac{D^\star}{D_{\rm crit}^{\rm naive}}\simeq0.535,
\]
is deliberately selected below the observed decay-loss threshold. It therefore remains within the empirically decaying uncompensated regime while making the delay-induced degradation sufficiently pronounced to assess the effect of predictor compensation. At the same delay, the predictor yields negative terminal-envelope slopes for both the total and critical energies,
\[
-1.6645\times10^{-3},
\qquad
-2.1317\times10^{-3},
\]
respectively. The simultaneous decay of these diagnostics shows that the predictor-based improvement is not confined to the final total-energy value: it is observed in the critical shoreline component targeted by the feedback and in the coupled shoreline--bathymetry response.

\begin{figure}[!ht]
\centering
\includegraphics[width=\linewidth]{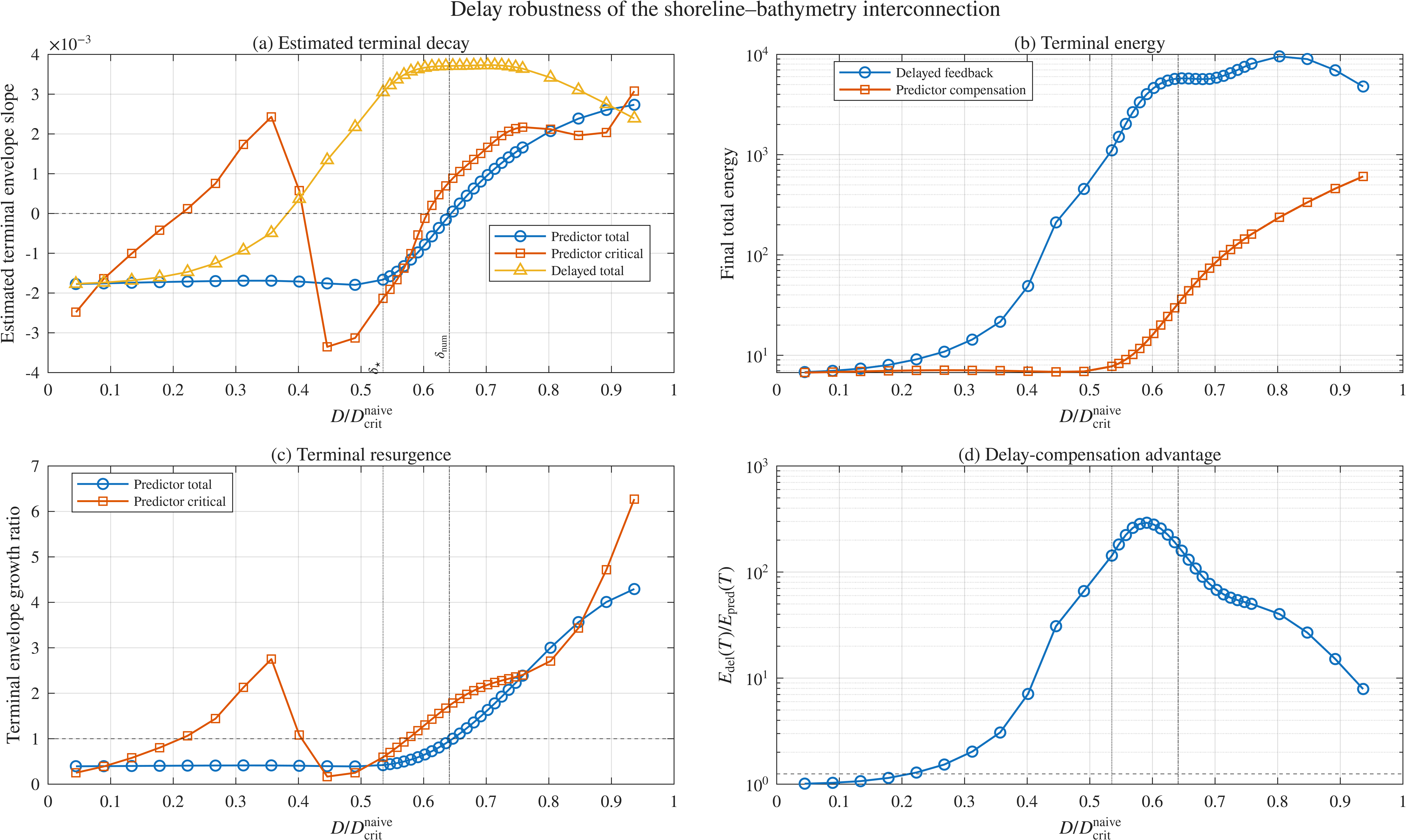}
\caption{Delay sensitivity of the coupled dynamics. The vertical markers denote \(D^\star\) and the empirical decay-loss threshold \(D_{\rm num}\); \(D_{\rm crit}^{\rm naive}\) refers only to the isolated semi-discrete critical subsystem.}
\label{fig:delay-robustness}
\end{figure}

\subsection{Closed-Loop Shoreline--Bathymetry Response}
\label{subsec:closed-loop-numerics}

At \(D=D^\star\), Fig.~\ref{fig:closed-loop-performance} compares open loop, uncompensated delayed feedback, and predictor compensation.
The comparison reveals two distinct effects of the actuator delay.
Without compensation, the feedback initially acts on the critical shoreline dynamics but eventually produces a strong energy resurgence.
Predictor compensation suppresses this degradation and preserves the decay of the critical component targeted by the feedback.

At \(t=T\),
\[
E_h^{\rm open}(T)=13.794,
\;
E_h^{\rm del}(T)=1083.2,
\;
E_h^{\rm pred}(T)=7.716.
\]
Predictor compensation therefore reduces the final total energy by \(99.29\%\) relative to uncompensated delayed feedback and by \(44.06\%\) relative to open loop. More importantly, on the critical shoreline subspace,
\[
E_{{\rm cr},h}^{\rm open}(T)=6.7033,
\;
E_{{\rm cr},h}^{\rm del}(T)=81.477,
\;
E_{{\rm cr},h}^{\rm pred}(T)=0.118.
\]
The pronounced reduction of the critical energy shows that the improvement is not confined to the terminal energy of the full discretized system. It is also observed on the finite-dimensional shoreline component targeted by the feedback, consistently with the role of predictor compensation in reducing the adverse effect of the actuator delay on the controlled critical dynamics.

\begin{figure}[!ht]
\centering
\includegraphics[width=\linewidth]{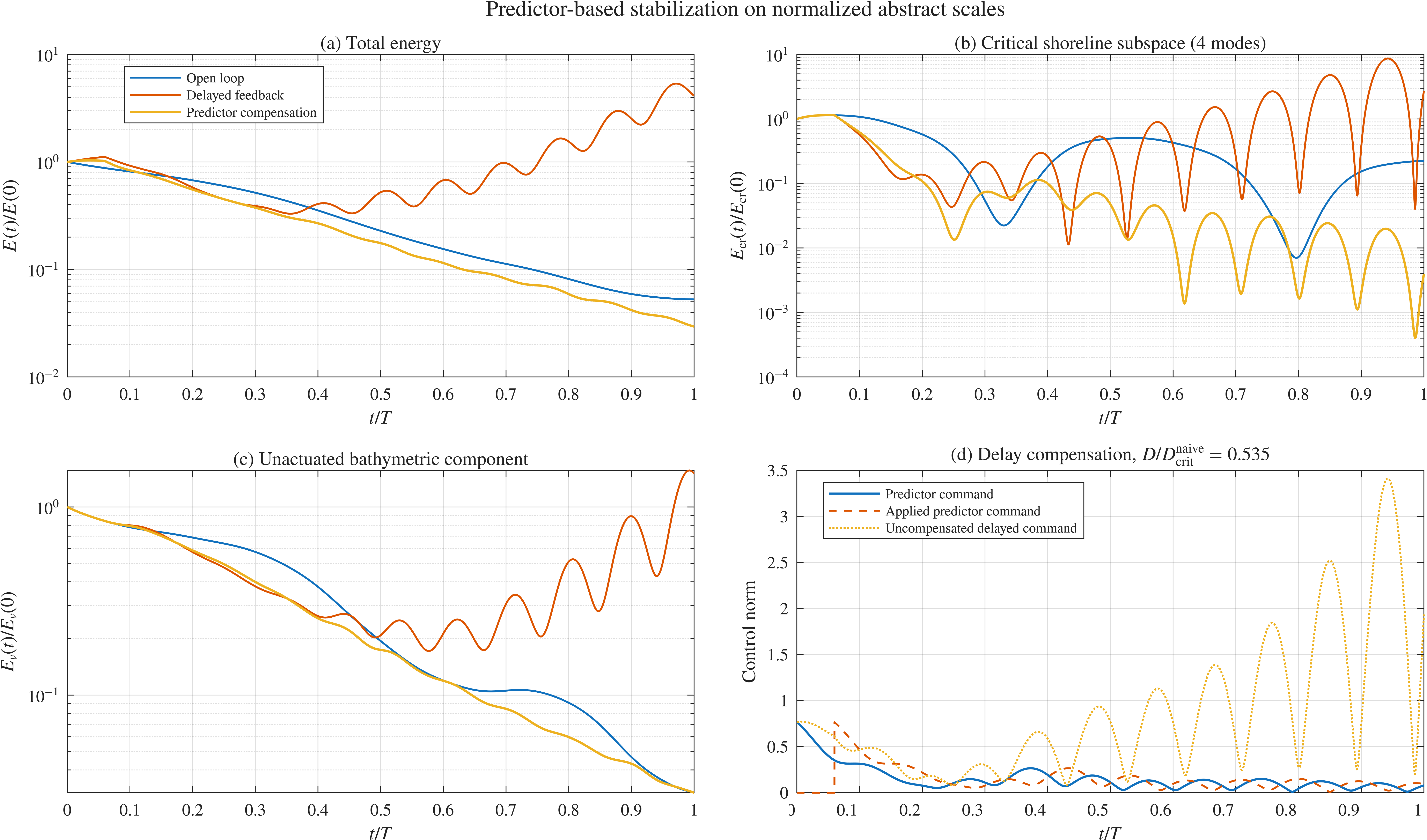}
\caption{Closed-loop diagnostics at \(D=D^\star\): total, critical shoreline, and bathymetric energies, together with the control signals.}
\label{fig:closed-loop-performance}
\end{figure}

Figure~\ref{fig:shoreline-dynamics} provides the spatial counterpart of these energy diagnostics. The uncompensated delayed feedback generates pronounced oscillatory amplification, showing that the deterioration is not confined to a terminal-energy indicator but develops throughout the shoreline dynamics. In contrast, predictor compensation maintains a substantially lower-amplitude response over the space--time domain. This behavior is consistent with the role of the predictor in compensating the temporal mismatch between command generation and delayed actuation.

\begin{figure}[!ht]
\centering
\includegraphics[width=\linewidth]{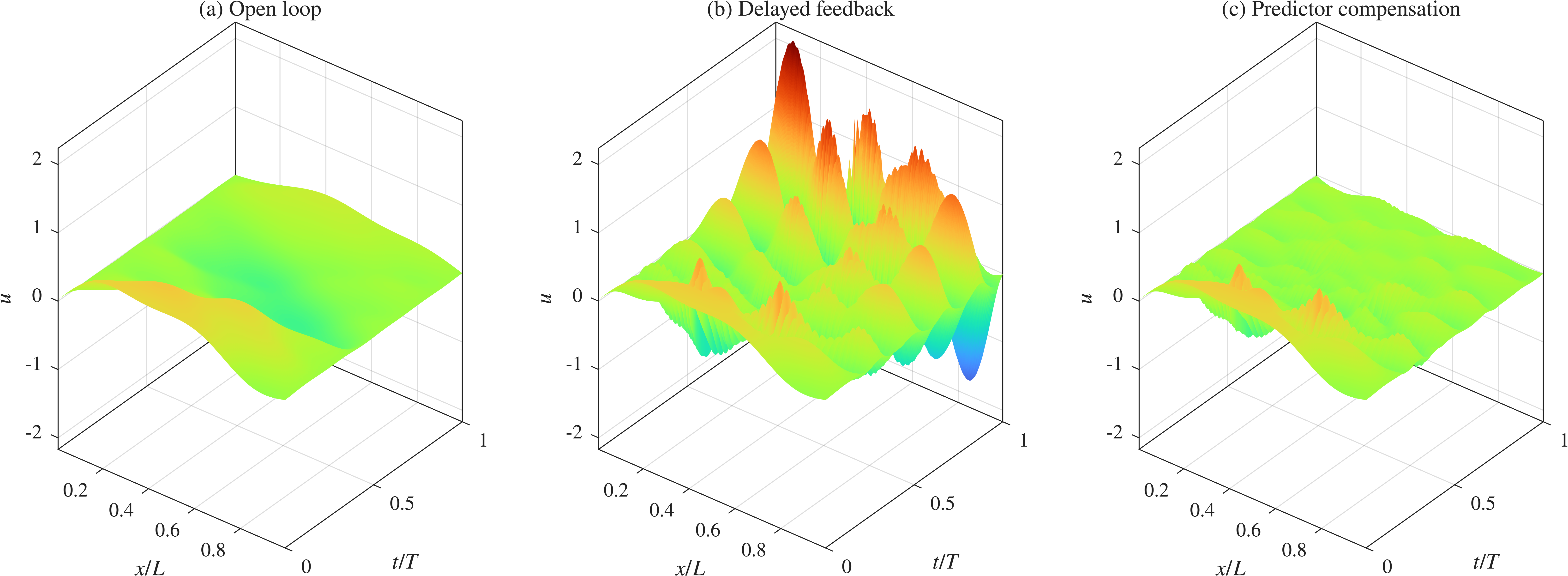}
\caption{Shoreline dynamics at \(D=D^\star\): (a) open loop, (b) uncompensated delayed feedback, and (c) predictor compensation. Identical scales are used in all panels.}
\label{fig:shoreline-dynamics}
\end{figure}

The bathymetric subsystem is not directly actuated, making Fig.~\ref{fig:bathymetric-dynamics} particularly informative about the coupled dynamics. The large shoreline amplification generated by the uncompensated delayed feedback propagates to the bathymetric component through the hetero-dimensional coupling. At the final time,
\[
E_{v,h}^{\rm open}(T)=7.0904,
\;
E_{v,h}^{\rm del}(T)=348.68,
\;
E_{v,h}^{\rm pred}(T)=7.0227.
\]
Predictor compensation reduces the bathymetric energy by \(97.99\%\) relative to uncompensated delayed feedback and returns it close to its open-loop level. The significance of this result is therefore not direct bathymetric regulation. Rather, predictor compensation limits the delay-induced shoreline amplification and prevents its propagation through the coupling to the unactuated bathymetric subsystem. This behavior is qualitatively consistent with the coupled stability mechanism captured by the small-gain analysis.

\begin{figure}[!ht]
\centering
\includegraphics[width=\linewidth]{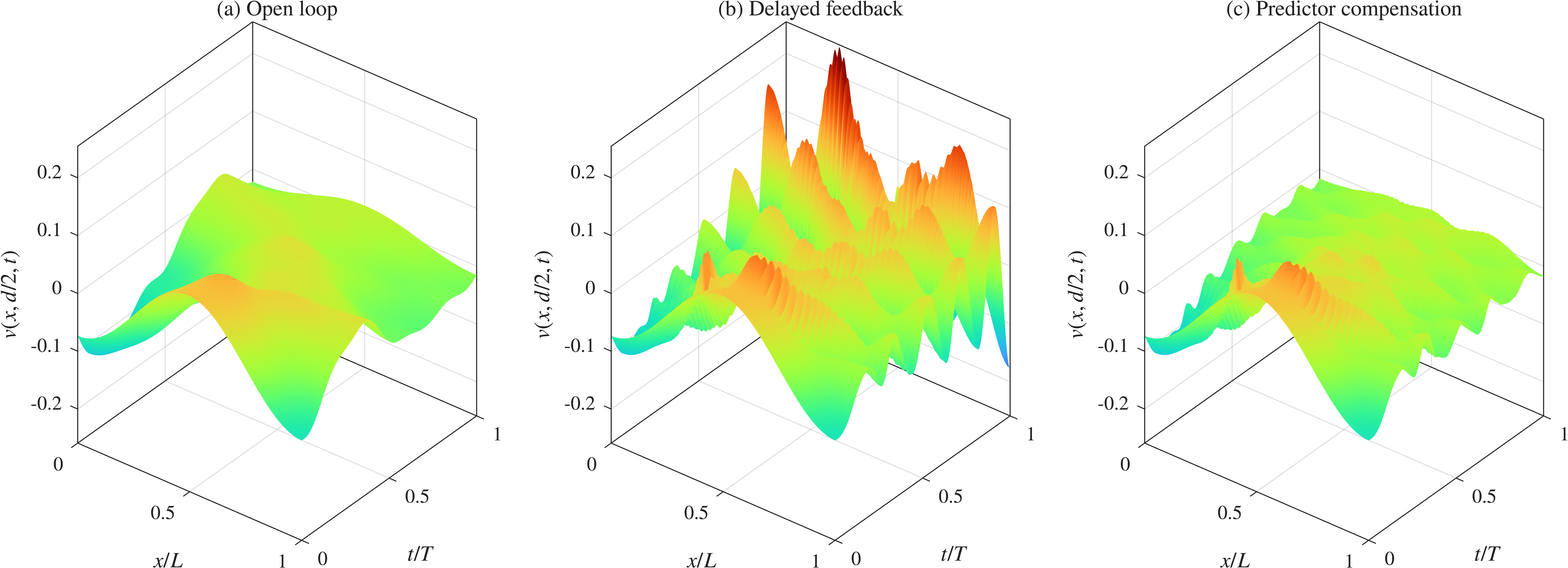}
\caption{Bathymetric centerline \(v(x,d/2,t)\) at \(D=D^\star\): (a) open loop, (b) uncompensated delayed feedback, and (c) predictor compensation. Identical scales are used in all panels.}
\label{fig:bathymetric-dynamics}
\end{figure}

\section{Conclusion}

Exponential stabilization has been established for a delayed hetero-dimensional shoreline--bathymetry system through shoreline actuation alone. The feedback is constructed on the finite-dimensional critical shoreline subspace and lifted to the full shoreline dynamics, while predictor compensation removes the explicit input delay from the nominal predictor dynamics. The remaining shoreline--bathymetry interaction is quantified through weighted input--output estimates, and predictor-based shoreline stabilization is combined with bathymetric coercivity through a small-gain condition to establish exponential stability of the full delayed closed-loop system.

The numerical results illustrate the mechanisms captured by the analysis. Uncompensated delayed feedback progressively loses effectiveness and can strongly amplify both the shoreline response and the indirectly coupled bathymetric dynamics. Predictor compensation restores decay of the critical shoreline component and prevents this delay-induced amplification from propagating to the unactuated bathymetric subsystem. Robustness with respect to uncertain environmental forcing and extensions to more detailed morphodynamic configurations remain natural directions for further study.

\bibliographystyle{unsrt}

\end{document}